\newtheorem{theorem}{Theorem}[section]
\newtheorem{lemma}[theorem]{Lemma}
\newtheorem{proposition}[theorem]{Proposition}
\theoremstyle{definition}
\theoremstyle{remark}
\newtheorem{remark}[theorem]{Remark}
\newcommand{\F}{\mathbb{F}}
\newcommand{\Z}{\mathbb{Z}}
\newcommand{\Q}{\mathbb{Q}}
\newcommand{\C}{\mathbb{C}}
\newcommand{\R}{\mathbb{R}}
\newcommand{\GL}{\mathrm{GL}}
\newcommand{\SL}{\mathrm{SL}}
\newcommand{\PSL}{\mathrm{PSL}}
\newcommand{\Sp}{\mathrm{Sp}}
\newcommand{\cl}{\mathrm{cl}}
\begin{document}

\title[Zariski density and computing 
with $S$-integral groups]{Zariski 
density and computing with 
$S$-integral groups}

\author{A.~S.~Detinko}
\address{Department of Computer Science\\
School of Computing and Engineering\\
University of Huddersfield \\
Huddersfield HD13DH \\
UK}
\email{A.Detinko@hud.ac.uk}

\author{D.~L.~Flannery}
\address{School of Mathematical and
Statistical Sciences\\
University of Galway\\
University Road\\
Galway H91 TK33\\
 Ireland}
\email{dane.flannery@universityofgalway.ie}

\author{A.~Hulpke}
\address{Department of Mathematics\\
Colorado State University\\
Fort Collins\\
 CO 80523-1874\\
USA}
\email{Alexander.Hulpke@colostate.edu}

\footnotetext{{\sl 2010 
Mathematics Subject Classification}: 
20-04, 20G15, 20H25, 68W30.}
\footnotetext{Keywords: linear group, 
$S$-integral, 
$S$-arithmetic,
Strong Approximation, 
Zariski density, algorithm.}

\begin{abstract}
We generalize our methodology
for computing with 
Zariski dense subgroups of 
$\SL(n, \Z)$ and $\Sp(n, \Z)$, 
to accommodate 
input dense subgroups $H$ of 
$\SL(n, \Q)$ and $\Sp(n, \Q)$.
A key task, backgrounded by the 
Strong Approximation theorem, is
computing a minimal congruence 
overgroup of $H$. Once we have this 
overgroup, we may describe 
all congruence quotients of $H$.
The case $n=2$ receives particular 
attention.
\end{abstract}

\maketitle

\section{Introduction}

In \cite{ExpMath,SAT,Density},
we developed a methodology for
computing with finitely generated
Zariski dense subgroups 
 of $\SL(n, \Z)$
and $\Sp(n, \Z)$. 
Our goal now is to generalize
this earlier work to handle dense   
subgroups of $\SL(n, \Q)$ 
 and $\Sp(n, \Q)$.
We are motivated by 
 applications that 
involve representations over $\Q$ 
rather than $\Z$. For example,
 when computing with 
$G\leq \Sp(n, \Z)$, a change of 
basis is sometimes needed, 
replacing $G$ by a
conjugate in $\Sp(n, \Q)$; 
 cf.~\cite{Symplectica}.

Let $H$ be a finitely generated subgroup 
of $\GL(n, \Q)$. Then $H\leq \GL(n, R)$ 
where $R$ is a localization $\Z[1/\mu]$ 
of $\Z$ for some positive $\mu\in \Z$.
Often we take $R$ to be the subring $R_H$ 
of $\Q$ generated by the entries of $h$ 
and $h^{-1}$ as $h$ runs through a 
finite generating set for $H$.
Linear groups over $\Z$ can be 
quite different to those over $R$.
As one instance: if $\mu >1$ then 
 $\SL(2, R)$ has the congruence 
subgroup property (CSP), while 
$\SL(2, \Z)$  does 
not~\cite{Mennicke,SerreCSP,VenkyCSP}. 

For a commutative ring $D$ with $1$,
let $\Gamma(n, D)$ denote 
 $\SL(n, D)$ or $\Sp(n, D)$;
if $D$ is the finite field $\F_q$
 of size $q$, then we write $\Gamma(n,q)$.
Let $S= \{1/p_1, \ldots , \allowbreak 
 1/p_k\}$ where the $p_i$ are the 
distinct prime divisors of $\mu>1$. 
Then $R=\allowbreak \Z[S]$ 
and we call $H\leq \Gamma(n,R)$ 
an \emph{$S$-integral group}. 
If $H$ has finite index in 
$\Gamma(n,R)$, then it
 is \emph{$S$-arithmetic}.
Each non-zero ideal 
$I\subset R$ is principal:
 $I = mR$ for a unique positive 
 integer $m$ coprime to $\mu$.
The congruence homomorphism modulo 
$I$ (namely, the reduction modulo $m$ map) is 
denoted $\varphi_m$. Thus 
$\varphi_m$ maps $R$ onto $\Z_m:= \Z/m\Z$,
and by entrywise extension we get a 
homomorphism $\Gamma(n, R)\rightarrow 
 \Gamma(n, \Z_m)$, 
 denoted $\varphi_m$ as well. 
The kernel $\Gamma_{n,m}$  
of $\varphi_m$ on $\Gamma(n,R)$ 
 is the 
\emph{principal congruence subgroup} (PCS)
of \emph{level $m$}. Observe that 
$\Gamma_{n,a}\Gamma_{n,b} = \Gamma_{n,d}$
and $\Gamma_{n,a}\cap \Gamma_{n,b} = 
\Gamma_{n,l}$
where $d = \mathrm{gcd}(a,b)$, 
 $l = \mathrm{lcm}(a,b)$.
 A subgroup of $\Gamma(n,R)$
that contains a PCS is a  
\emph{congruence subgroup}.
The foregoing terminology and notation
may be used for coefficient rings  
other than $\Z[S]$; e.g., if $r$ and
$m$ are positive integers with 
$r$ dividing $m$, then $\varphi_r$ on $\Z_m$ 
is the reduction modulo $r$ map.
Throughout, $\F$ stands for an arbitrary 
field, and $1_m$, $0_m$ denote
the $m\times m$ identity and
zero matrices, respectively. 

Our main algorithms accept
subgroups $H\leq \Gamma(n,R)$ that are 
Zariski dense in $\Gamma(n,\C)$.
Algorithms to test density are 
discussed in \cite{Density}.
Another input item is 
the set $\Pi(H)$ of all primes $p\in \Z$
such that $\varphi_p(H) \neq \Gamma(n,p)$.
Since $H$ is dense 
in $\Gamma(n, \mathbb{R})$,  
the set $\Pi(H)$ is finite
 (by the Strong Approximation 
theorem~\cite[Window~9]{LubotzkySegal}).
See \cite{ExpMath,SAT,Density}
for algorithms to compute $\Pi(H)$.
 
Section~\ref{ComputingQuotients} 
focuses on computing the level of $H$.
We generalize
the notion of arithmetic closure
as in \cite{Density} to that of 
congruence closure $\cl(H)$, which 
exists for any dense 
$H\leq \Gamma(n, R)$. Then 
we show how to compute the 
(level of the) unique maximal PCS 
of $\Gamma(n, R)$ in $\cl(H)$, 
thereby determining $\cl(H)$.
Section~\ref{ComputingQuotients} also 
provides a thorough treatment of the 
case $n=2$, omitted 
 from \cite{Density}. 
We are then able to compute
congruence quotients of $H$
modulo all ideals of $R$ and 
for all $n\geq 2$.

Section~\ref{Exp} 
reports on experiments performed with 
our {\sf GAP}~\cite{Gap} implementation 
of the main algorithms. 
To emphasize the utility of
such experimentation, we include
the solution of a problem 
originating from further afield in
linear group theory.

\section{Computing the level of 
a dense $S$-integral group}

\label{ComputingQuotients}

Let $R=\Z[1/\mu]$ for a positive 
integer $\mu$.
If $n>2$, or $R\neq \Z$,
then  $\Gamma(n,R)$ has the CSP:
the $S$-arithmetic
subgroups of $\Gamma(n, R)$ 
are exactly its congruence subgroups.

In \cite[$\S \hspace{1pt} 3.3$]{Density}, 
we defined the \emph{arithmetic closure} 
of a dense group in $\Gamma(n, \Z)$, 
$n > 2$. As an intersection of
congruence subgroups, the arithmetic
closure is also called an 
\emph{extended congruence 
 subgroup}~\cite{VenkyExtended}.
We now broaden this concept to
\emph{congruence closure}
of any dense group 
in $\Gamma(n, \Q)$, $n\geq 2$.
Thereafter we devise algorithms to 
compute the closure.

If $n \geq 2$ then much of 
\cite[$\S \hspace{1pt} 2$]{Density}
carries over from $\Z$ to $R$
 without complication, simply by
avoiding the primes that divide 
$\mu$; see Section~\ref{case1}.
We deal with degree $2$ 
specifically
in Section~\ref{case2}.

\subsection{General degree}
\label{case1}

Let $H$ be a finitely generated 
dense subgroup of $\Gamma(n,\Q)$
for $n\geq 2$,
and let $R = R_H$.
\begin{theorem}
\label{FiniteNumberOvergroups} 
The intersection of all congruence 
subgroups of $\hspace{.5pt} \Gamma(n,R)$ 
containing $H$ is a congruence 
subgroup of $\hspace{.5pt} \Gamma(n,R)$.
\end{theorem}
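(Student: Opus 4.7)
The plan is to exhibit a PCS $\Gamma_{n,M}$ contained in the intersection; then, being a subgroup containing a PCS, the intersection is automatically a congruence subgroup. As a first step, I would simplify the intersection. Any congruence subgroup $K$ containing $H$ must contain some PCS $\Gamma_{n,m}$; since $\Gamma_{n,m}$ is normal in $\Gamma(n,R)$, the product $H\cdot \Gamma_{n,m}$ is itself a subgroup, and it is the smallest congruence subgroup of level dividing $m$ that contains $H$. Hence the intersection in the statement equals $\bigcap_{m \geq 1,\, \gcd(m,\mu)=1} H\cdot \Gamma_{n,m}$, a family downward-directed under divisibility of $m$.

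Next I would invoke Strong Approximation. Since $H$ is dense, $\Pi(H)$ is finite, and for $m$ whose prime factors avoid $\Pi(H)$ (and $\mu$) we have $\varphi_m(H) = \Gamma(n,\Z_m)$, so $H\cdot \Gamma_{n,m} = \Gamma(n,R)$. More strongly, in the form of~\cite[Window~9]{LubotzkySegal}, the closure $\overline{H}$ of $H$ in $\widehat{\Gamma} := \prod_{p \nmid \mu} \Gamma(n,\Z_p)$ is open, so there exists a positive integer $M$ (coprime to $\mu$, supported on primes in $\Pi(H)$) with $\overline{H} \supseteq \widehat{\Gamma}_M := \ker\bigl(\widehat{\Gamma} \to \Gamma(n,\Z_M)\bigr)$.

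To conclude, I would show $\Gamma_{n,M} \subseteq H\cdot \Gamma_{n,m}$ for every admissible $m$, equivalently $\varphi_m(\Gamma_{n,M}) \subseteq \varphi_m(H)$. Continuity and density give $\varphi_m(H) = \varphi_m(\overline{H}) \supseteq \varphi_m(\widehat{\Gamma}_M)$, and a CRT calculation splitting $m$ into its $\Pi(H)$-part and the coprime complement identifies $\varphi_m(\widehat{\Gamma}_M)$ with $\varphi_m(\Gamma_{n,M})$, using also that $\Gamma(n,R)$ surjects onto $\Gamma(n,\Z_{\mathrm{lcm}(m,M)})$ (SA for the ambient $S$-arithmetic group).

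The main obstacle will be the joint/subdirect step: it does not suffice to check prime-by-prime that $\varphi_{p^{v_p(m)}}(\Gamma_{n,M}) \subseteq \varphi_{p^{v_p(m)}}(H)$, because $\varphi_m(H)$ could a priori be a proper subdirect product of its prime-power projections. Openness of $\overline{H}$ is precisely what rules this out, by supplying a single global witness $\widehat{\Gamma}_M$ whose image under each $\varphi_m$ delivers containment simultaneously at all primes of $m$.
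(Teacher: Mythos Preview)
Your proposal is correct and is essentially an unpacking of the reference the paper invokes: the paper gives no argument of its own but simply cites \cite[Theorem~2, p.~391]{LubotzkySegal}, whose content is precisely that the closure of a Zariski dense $H$ in the congruence completion $\widehat{\Gamma}$ is open. Your sketch then draws the consequence for the intersection $\bigcap_m H\Gamma_{n,m}$ in the expected way, and your remark about the subdirect obstruction---handled by the global witness $\widehat{\Gamma}_M$---is exactly the point of using openness rather than a prime-by-prime statement.
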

\begin{proof}
See \cite[Theorem~2, p.~391]{LubotzkySegal}.
\end{proof}

Let $\cl(H)$ be the intersection of all 
congruence subgroups of $\Gamma(n, R)$ 
that contain $H$. By 
Theorem~\ref{FiniteNumberOvergroups},
$\cl(H)$ is the unique smallest such 
congruence subgroup:
the \emph{congruence closure 
of $H$} in $\Gamma(n,R)$. 
\begin{remark}
The definition of $\cl(H)$
is subject to the choice 
of coefficient ring $R$.  
For example, 
 $H=\SL(n, \Z)$ is a maximal 
subgroup of $\SL(n, \Z [1/p])$ for 
any prime $p$, so that 
$\cl(H)=\SL(n, \Z [1/p])$ over 
$R=\Z[1/p]$;  whereas 
of course  $\cl(H)=H$ over $R=\Z$.
\end{remark}

\begin{remark}
If $\Gamma(n,R)$ has the CSP, then
$\cl(H)$ is the same as the
\emph{$S$-arithmetic closure} of $H$,
i.e., $\cl(H)$ is
 the intersection of all
$S$-arithmetic subgroups of 
$\Gamma(n,R)$ that contain $H$.
Thus, our current definition of 
$\cl(H)$ agrees with the one
 in \cite{Density} 
for groups $H$ over $\Z$.
\end{remark}

\begin{lemma}[{cf.~\cite[Lemma~3.11]{Density}}]
\label{LellSame}
$\Pi(H) = \Pi(\cl(H))$.
\end{lemma}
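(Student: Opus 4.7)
The plan is to prove the stronger statement that $\varphi_p(H)=\varphi_p(\cl(H))$ for every prime $p$ on which $\varphi_p$ is defined (that is, every $p$ not dividing $\mu$), from which the equality of the two exceptional sets follows immediately by comparing images to $\Gamma(n,p)$.

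First I would establish the inclusion $\varphi_p(H)\leq \varphi_p(\cl(H))$, which is immediate from $H\leq \cl(H)$. In particular this already gives $\Pi(\cl(H))\subseteq \Pi(H)$: a prime where $\cl(H)$ fails to surject is one where $H$ fails too.

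For the reverse inclusion I would exhibit a congruence subgroup of $\Gamma(n,R)$ that sits between $H$ and $\varphi_p^{-1}(\varphi_p(H))$. The key observation is that the preimage $K_p:=\varphi_p^{-1}(\varphi_p(H))$ contains the principal congruence subgroup $\Gamma_{n,p}=\ker\varphi_p$, so $K_p$ is itself a congruence subgroup of $\Gamma(n,R)$; and it obviously contains $H$. By the definition of $\cl(H)$ as the intersection of all congruence subgroups of $\Gamma(n,R)$ containing $H$ (justified by Theorem~\ref{FiniteNumberOvergroups}), this forces $\cl(H)\leq K_p$, and hence $\varphi_p(\cl(H))\leq \varphi_p(K_p)=\varphi_p(H)$.

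Combining the two inclusions yields $\varphi_p(H)=\varphi_p(\cl(H))$, which gives $\Pi(H)=\Pi(\cl(H))$ as required. There is no genuine obstacle here; the main point is simply to recognise that $\varphi_p^{-1}(\varphi_p(H))$ is a congruence subgroup, so that the universal property of $\cl(H)$ can be applied.
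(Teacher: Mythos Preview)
Your argument is correct. The paper does not supply a proof of this lemma at all; it merely cites \cite[Lemma~3.11]{Density}. Your proof via the observation that $\varphi_p^{-1}(\varphi_p(H))$ is a congruence subgroup containing $H$, so that the minimality of $\cl(H)$ forces $\varphi_p(\cl(H))\leq\varphi_p(H)$, is exactly the standard one-line justification, and indeed the paper uses (without proof) the stronger assertion $\delta_H(m)=\delta_{\cl(H)}(m)$ for all $m>1$ a few lines later, which your argument equally well establishes by replacing $p$ with $m$.
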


As a congruence subgroup, $\cl (H)$ 
contains a largest PCS; let its level
be $\ell$. Hence 
$\cl(H)=\Gamma_{n,\ell}\hspace{.5pt} H$.
Furthermore, $\Gamma_{n,m}\leq \cl(H)$ 
if and only if 
$\ell \hspace{1.5pt} 
|\hspace{1.5pt} m$.
We say that  $H$ itself 
has level $\ell$.

\begin{lemma}\label{EasyWay}
Each prime in $\Pi(H)$ divides 
the level of $H$.
\end{lemma}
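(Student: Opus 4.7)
The plan is to prove the contrapositive: if $p$ is a prime with $p\nmid \ell$ (and $p\nmid \mu$, so that $\varphi_p$ makes sense on $R$), then $\varphi_p(H)=\Gamma(n,p)$, whence $p\notin \Pi(H)$.

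First I would reduce from $H$ to $\cl(H)$. By Lemma~\ref{LellSame}, $\Pi(H)=\Pi(\cl(H))$, so it is enough to show $\varphi_p(\cl(H))=\Gamma(n,p)$. Since $\cl(H)=\Gamma_{n,\ell}H\supseteq \Gamma_{n,\ell}$, the task further reduces to establishing
\[
\varphi_p(\Gamma_{n,\ell})=\Gamma(n,p).
\]

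Next I would deduce this from surjectivity of the reduction map together with the Chinese Remainder Theorem. Since $\gcd(p,\ell)=1$ and both $p$ and $\ell$ are coprime to $\mu$, the natural map $\Gamma(n,R)\to \Gamma(n,\Z_{p\ell})$ is surjective (this is the standard ``strong approximation'' surjectivity statement for $\Gamma\in\{\SL,\Sp\}$ and $n\geq 2$). Via the CRT isomorphism $\Gamma(n,\Z_{p\ell})\cong \Gamma(n,\Z_p)\times \Gamma(n,\Z_\ell)$, any element of $\Gamma(n,p)\times\{1_n\}$ lifts to some $g\in \Gamma(n,R)$ whose reduction modulo $\ell$ is trivial, i.e.\ $g\in \Gamma_{n,\ell}$; and $\varphi_p(g)$ can be made an arbitrary prescribed element of $\Gamma(n,p)$. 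This gives $\varphi_p(\Gamma_{n,\ell})=\Gamma(n,p)$, and hence $\varphi_p(\cl(H))=\Gamma(n,p)$ as required.

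The main obstacle is the second step: one must be careful to invoke the correct surjectivity result for the coefficient ring $R=\Z[1/\mu]$ and to justify applying the CRT at the group level, which is immediate once surjectivity onto $\Gamma(n,\Z_{p\ell})$ is known. Everything else is formal manipulation of the congruence-subgroup lattice relations $\Gamma_{n,a}\cap \Gamma_{n,b}=\Gamma_{n,\mathrm{lcm}(a,b)}$ already recorded in the introduction.
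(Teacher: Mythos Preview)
Your argument is correct and follows the same contrapositive strategy as the paper: use Lemma~\ref{LellSame} to pass to $\cl(H)$, and then show $\varphi_p(\Gamma_{n,\ell})=\Gamma(n,p)$ whenever $p\nmid\ell$. The difference lies only in how this last step is executed. The paper does it by an explicit elementary computation: the transvection $t_{ij}(\ell)$ lies in $\Gamma_{n,\ell}$, and $\varphi_p(t_{ij}(\ell))^{k}=t_{ij}(1)$ for $k\equiv\ell^{-1}\bmod p$, so all the elementary generators of $\SL(n,p)$ appear in $\varphi_p(\Gamma_{n,\ell})$ (with an analogous argument for $\Sp$). Your route via surjectivity of $\Gamma(n,R)\to\Gamma(n,\Z_{p\ell})$ together with the CRT splitting is valid, but the surjectivity you invoke, when unwound, amounts to exactly this transvection calculation; calling it ``strong approximation'' is heavier than necessary. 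The paper's version is thus more self-contained, while yours is more structural and makes clear that the only input needed is surjectivity onto finite congruence quotients.
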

\begin{proof}
Let $\Gamma(n,\Q)=\SL(n,\Q)$; 
the proof for $\Gamma(n,\Q)=\Sp(n,\Q)$ 
is similar. 

Let $t_{ij}(a)$ be the  
elementary matrix with $1$s down 
the main diagonal, $a$ in off-diagonal
position $(i,j)$, and $0$s elsewhere.
Suppose that $p\in \Pi(H)$ does not 
divide the level $\ell$ of $H$. Then 
$\varphi_p(\Gamma_{n,\ell})$
 contains $t_{ij}(1)= \allowbreak 
 \varphi_p(t_{ij}(\ell))^{k}$
where $k\equiv \ell^{-1} 
 \allowbreak \bmod p$. 
Since the $t_{ij}(1)$
 generate $\SL(n,p)$, we get 
$\varphi_p(\cl(H)) = 
\varphi_p(H\Gamma_{n,\ell} ) = 
\allowbreak \SL(n,p)$.
Hence $p\not \in \Pi(H)$ by 
Lemma~\ref{LellSame}.  
\end{proof}

Define 
\[
\delta_H(m)=
|\Gamma(n,R): \Gamma_{n,m}H|.
\]
That is, $\delta_H(m)=
|\varphi_m(\Gamma(n,R)):\varphi_m(H)|$, 
and $\Pi(H)$ comprises the
primes $p$ not dividing $\mu$ 
with $\delta_H(p)>1$.
Additionally 
$\delta_H(m) = \delta_{\cl(H)}(m)$
 for all $m>1$.
 
Denote the set of prime 
divisors of an integer $k$ by $\pi(k)$.
For arithmetic $H\leq \Gamma(n,\Z)$, $n>2$, 
we proved in \cite{Density}
that $\pi(\ell)= \widetilde{\Pi}(H)$, 
where $\widetilde{\Pi}(H)\supseteq \Pi(H)$ 
is defined in \cite[(2.3)]{Density}. 
The calculation of $\widetilde{\Pi}(H)$
 incorporates a test on certain
$\delta_H$-values; see 
Theorem~\ref{TildePiSmallPi}.
We call $2$ an `exceptional' prime, 
because when $2\leq n\leq 4$, 
the arithmetic group $H$ could have 
even level yet surject onto
$\Gamma(n,2)$ modulo $2$. However,
$2$ is the single potential disparity between
$\widetilde{\Pi}(H)$ and $\Pi(H)$ when
$2<n\leq 4$.
As we will see, new exceptional primes
 occur when $n=2$. 

We refer to
\cite[$\S \hspace{1pt} 2.4.2$]{Density}
for an explanation of the procedure 
${\tt LevelMaxPCS}$ to compute the level 
$\ell$ of an arithmetic 
group $H\leq \Gamma(n,\Z)$, $n>2$. 
Apart from (a generating set
for) $H$, the other 
input is a set of primes:
in practice,  
 $\widetilde{\Pi}(H)$.
 Below, we paraphrase the pseudocode
 for $\tt LevelMaxPCS$ from
\cite[p.~975]{Density}.

\vspace{17.5pt}

${\tt LevelMaxPCS}(S, \Omega)$

\vspace{5pt}

{\sc Input}: a generating set $S$ for a 
subgroup $H\leq \Gamma(n,\Q)$; 
a set $\Omega$ of primes.

{\sc Output}: an integer $r$.

\vspace{5pt}

\begin{itemize}
\item[] For each $p\in\Omega$ let $\nu_p=1$ and 
$z_p=\prod_{r\in\Omega, r\neq p} r$.

\item[] \vspace{-10pt}

\item[] While $\exists \, p\in\Omega$ such that
$\delta_H(p^{\nu_p+1} \cdot z_p)>
 \delta_H(p^{\nu_p} \cdot z_p)$
\begin{itemize} 
\item[] increment $\nu_p$ by $1$ and repeat.
\end{itemize}

\item[] \vspace{-10pt}

\item[] Return  $r:=\prod_{p\in\Omega}p^{\nu_p}$.
\end{itemize}

\vspace{17.5pt}

\begin{theorem}\label{level} 
${\tt LevelMaxPCS}$ with 
input dense $H$ of level $\ell$ and 
$\Omega = \pi(\ell)$ terminates, 
returning $\ell$. 
\end{theorem}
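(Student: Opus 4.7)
My plan rests on the identity
\[
\delta_H(m) = \delta_H(\gcd(m,\ell)),
\]
which I verify first. Since $\delta_H=\delta_{\cl(H)}$ and $\Gamma_{n,\ell}\leq\cl(H)$, the image $\varphi_m(\cl(H))$ contains $\varphi_m(\Gamma_{n,\ell})=\Gamma_{n,\gcd(m,\ell)}/\Gamma_{n,m}$, which is the kernel of the reduction $\Gamma(n,\Z_m)\to\Gamma(n,\Z_{\gcd(m,\ell)})$; comparing indices yields the identity.

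Writing $\ell=\prod_{q\in\Omega}q^{e_q}$ with each $e_q\geq 1$, and setting $m_a:=p^a z_p$, I compute $\gcd(m_a,\ell)=p^{\min(a,e_p)} z_p$. For $\nu_p\geq e_p$, both $\gcd(m_{\nu_p+1},\ell)$ and $\gcd(m_{\nu_p},\ell)$ equal $p^{e_p} z_p$, so $\delta_H(m_{\nu_p+1})=\delta_H(m_{\nu_p})$ and the test fails. Thus each $\nu_p$ is bounded above by $e_p$, giving termination.

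For correctness I must show that the test succeeds for every $1\leq\nu_p<e_p$. Assume the contrary: a standard index count shows that $\delta_H(m_{\nu_p+1})=\delta_H(m_{\nu_p})$ is equivalent to $\Gamma_{n,m_{\nu_p}}\leq H\,\Gamma_{n,m_{\nu_p+1}}$. Reducing modulo $p^{\nu_p+1}$, which annihilates $\Gamma_{n,m_{\nu_p+1}}$ and sends $\Gamma_{n,m_{\nu_p}}$ onto $\Gamma_{n,p^{\nu_p}}/\Gamma_{n,p^{\nu_p+1}}$, extracts the single-prime inclusion $\Gamma_{n,p^{\nu_p}}\leq H\,\Gamma_{n,p^{\nu_p+1}}$. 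Passing to closures in $\Gamma(n,\Z_p)$ and invoking Strong Approximation, the closure of $H$ is open with $p$-local PCS level equal to $e_p$, and the inclusion forces the intersection of that closure with $\Gamma_{n,p^{\nu_p}}(\Z_p)$ to surject onto the Frattini quotient $\Gamma_{n,p^{\nu_p}}(\Z_p)/\Gamma_{n,p^{\nu_p+1}}(\Z_p)$ --- valid since $\nu_p\geq 1$ puts all commutators and $p$-th powers of $\Gamma_{n,p^{\nu_p}}$ inside $\Gamma_{n,p^{\nu_p+1}}$. The Frattini argument in this pro-$p$ group then shows that the closure contains $\Gamma_{n,p^{\nu_p}}(\Z_p)$, contradicting $\nu_p<e_p$. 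This final Frattini/pro-$p$ step, bridging the global congruence equality to the local filtration, is the main obstacle; the rest is routine bookkeeping with the identity above.
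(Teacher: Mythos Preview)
Your termination argument via the identity $\delta_H(m)=\delta_H(\gcd(m,\ell))$ is correct. The correctness half, however, has a real gap. After reducing modulo $p^{\nu_p+1}$ to obtain the single--prime inclusion $\Gamma_{n,p^{\nu_p}}\leq \cl(H)\,\Gamma_{n,p^{\nu_p+1}}$, you pass to $\Gamma(n,\Z_p)$ and assert that the $p$-adic closure of $\cl(H)$ has ``$p$-local PCS level equal to $e_p$''. That assertion is neither proved nor true in general: the $p$-adic closure of $\cl(H)$ is simply the preimage of $\varphi_{p^{e_p}}(\cl(H))\leq\Gamma(n,\Z_{p^{e_p}})$, and nothing prevents this \emph{projection} from containing $\Gamma_{n,p^{e_p-1}}/\Gamma_{n,p^{e_p}}$ while the global level of $\cl(H)$ is still $\ell$. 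Indeed $\varphi_\ell(\cl(H))$ can sit as a proper subdirect product in $\Gamma(n,\Z_{p^{e_p}})\times\Gamma(n,\Z_{\ell/p^{e_p}})$ (the second factor typically has order divisible by $p$), and then the projection to the first factor is strictly larger than the ``slice'' $\varphi_\ell(\cl(H))\cap\bigl(\Gamma(n,\Z_{p^{e_p}})\times\{1\}\bigr)$ that actually governs the level. Your reduction mod $p^{\nu_p+1}$ throws away exactly the information needed to exclude this. (A secondary issue: your Frattini justification is the wrong way round---you verify $\Phi\leq\Gamma_{n,p^{\nu_p+1}}$, whereas the argument requires $\Gamma_{n,p^{\nu_p+1}}\leq\Phi$.)

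The repair is to keep $z_p$ throughout rather than project to a single prime. By CRT,
\[
\Gamma_{n,p^{\nu_p}z_p}/\Gamma_{n,\ell}\;\cong\;\bigl(\Gamma_{n,p^{\nu_p}}/\Gamma_{n,p^{e_p}}\bigr)\times\prod_{q\neq p}\bigl(\Gamma_{n,q}/\Gamma_{n,q^{e_q}}\bigr)
\]
is the direct product of a $p$-group $P$ and a $p'$-group $Q$; this is precisely why $z_p$ is the product of the \emph{other} primes in $\pi(\ell)$. Since $\gcd(|P|,|Q|)=1$, every subgroup of $P\times Q$ is of the form $P'\times Q'$, so $L:=\cl(H)\cap\Gamma_{n,p^{\nu_p}z_p}$ satisfies $L/\Gamma_{n,\ell}=P'\times Q'$. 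The assumed equality $\delta_H(p^{\nu_p+1}z_p)=\delta_H(p^{\nu_p}z_p)$ then gives $P'\cdot\bigl(\Gamma_{n,p^{\nu_p+1}}/\Gamma_{n,p^{e_p}}\bigr)=P$. A correct Frattini step---using that the $p$-th power map is \emph{surjective} between successive graded pieces, so that $\Gamma_{n,p^{\nu_p+1}}/\Gamma_{n,p^{e_p}}=\Phi(P)$---yields $P'=P$. Hence $\cl(H)\supseteq L\supseteq\Gamma_{n,\ell/p^{\,e_p-\nu_p}}$, contradicting the minimality of $\ell$. This coprimality-plus-Frattini argument inside $\Gamma_{n,p^{\nu_p}z_p}/\Gamma_{n,\ell}$ is the content of \cite[Theorem~2.22]{Density} to which the paper defers.
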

\begin{proof}
This is almost identical to the 
proof of \cite[Theorem~2.22]{Density}.
\end{proof}

\begin{remark}
We obtain a generating set 
for $\cl(H)$ if we know a generating 
set for each PCS in $\Gamma(n,R)$;
cf.~\cite{SuryVenky}.
\end{remark}

\begin{remark}\label{ProfiniteNascent}
Suppose that $\Pi(H)=\emptyset$. 
By \cite[(2.3)]{Density}, if either 
$n>4$, or $3\leq n\leq 4$ and 
$\delta_H(4)=\allowbreak 1$,
then $\widetilde{\Pi}(H)=\emptyset$ too.
This means that $H$ has level $1$, 
i.e., $\cl(H) = \Gamma(n,R)$, and 
$\tt LevelMaxPCS$ with input $H$ returns $1$ 
by default.
For $H\leq \SL(n,\Z)$, in this 
situation $H$ is 
\emph{profinitely dense}:
$\delta_H(m)=1$ for all
$m >1$.
\end{remark}

The computation of $\pi(\ell)$ is 
a separate endeavor, undertaken 
before running $\tt LevelMax$-$\tt PCS$. 
 As noted, in practice 
$\pi(\ell)$ will be input as 
$\widetilde{\Pi}(H)$. 
The latter is defined for $n=2$ in 
Section~\ref{case2}. 
We confirm that $\widetilde{\Pi}(H)=\pi(\ell)$
 by combining Theorems~\ref{TildePiSmallPi} 
and \ref{analogue217}.
\begin{theorem}
\label{TildePiSmallPi} 
For $n>2$, let $H\leq \Gamma(n,\Q)$ be 
dense of level $\ell$.
Let $q$ be the product of 
the odd primes in $\Pi(H)$. 
Then $\ell$ is even if and only if 
$\delta_H(q)<\delta_H(4q)$.  
Therefore $\widetilde{\Pi}(H) = \pi(\ell)$.
\end{theorem}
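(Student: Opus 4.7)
The plan is to convert the inequality $\delta_H(q)<\delta_H(4q)$ into a group-theoretic containment via the Chinese remainder theorem, and then to show that this containment captures the parity of $\ell$ through the structure of $\cl(H)=H\Gamma_{n,\ell}$ together with Theorem~\ref{analogue217}.

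Since $q$ is odd, CRT gives $\Gamma(n,\Z_{4q}) \cong \Gamma(n,\Z_4)\times\Gamma(n,\Z_q)$, under which the natural projection $\Gamma(n,\Z_{4q})\twoheadrightarrow\Gamma(n,\Z_q)$ has kernel $K := \Gamma(n,\Z_4)\times\{1_n\} = \varphi_{4q}(\Gamma_{n,q})$. Restricting this projection to $\varphi_{4q}(H)$ yields a surjection onto $\varphi_q(H)$ with kernel $K\cap\varphi_{4q}(H)$, so
\[
\delta_H(4q) = \delta_H(q)\cdot\frac{|K|}{|K\cap\varphi_{4q}(H)|}.
\]
Hence $\delta_H(q)<\delta_H(4q)$ is equivalent to $K\not\subseteq\varphi_{4q}(H)$.

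For the easy direction, if $\ell$ is odd then $\gcd(4q,\ell)=\gcd(q,\ell)$ is odd, so under the CRT splitting $\varphi_{4q}(\Gamma_{n,\ell}) = \Gamma(n,\Z_4)\times\ker(\Gamma(n,\Z_q)\to\Gamma(n,\Z_{\gcd(q,\ell)})) \supseteq K$. Since $\varphi_{4q}(H) = \varphi_{4q}(\cl(H)) \supseteq \varphi_{4q}(\Gamma_{n,\ell})$, we obtain $K\subseteq\varphi_{4q}(H)$. Conversely, assume $\ell$ is even. If $2\in\Pi(H)$, then $\varphi_4(H)\neq\Gamma(n,\Z_4)$, so the $\Gamma(n,\Z_4)$-projection of $\varphi_{4q}(H)$ is proper and $K\not\subseteq\varphi_{4q}(H)$. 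The hard case is $2\notin\Pi(H)$, which by Theorem~\ref{analogue217} arises only when $n\in\{3,4\}$. Setting $a := v_2(\ell)\geq 1$, I would apply Goursat's lemma to $\varphi_\ell(H)\leq\Gamma(n,\Z_{2^{a}})\times\Gamma(n,\Z_{\ell/2^{a}})$: the minimality of $\ell$ means $\Gamma_{n,\ell/2}\not\subseteq\cl(H)$, which translates into a nontrivial common quotient on the $\Gamma(n,\Z_{2^a})$-side. The main obstacle is to show this obstruction persists under further reduction to $\Gamma(n,\Z_4)\times\Gamma(n,\Z_q)$; Lemma~\ref{EasyWay} (every odd prime of $\ell$ divides $q$) ensures no odd component is collapsed by the descent, so the nontrivial common quotient survives and forces $K\not\subseteq\varphi_{4q}(H)$. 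The argument parallels the corresponding one for $\Z$-coefficients in \cite[\S 2]{Density}.

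For the concluding statement $\widetilde{\Pi}(H)=\pi(\ell)$: Lemma~\ref{EasyWay} gives $\Pi(H)\subseteq\pi(\ell)$ and Theorem~\ref{analogue217} gives $\pi(\ell)\subseteq\Pi(H)\cup\{2\}$, so $\pi(\ell)$ equals $\Pi(H)$ if $\ell$ is odd and $\Pi(H)\cup\{2\}$ if $\ell$ is even. Since by \cite[(2.3)]{Density} $\widetilde{\Pi}(H)$ adjoins $2$ to $\Pi(H)$ exactly when $\delta_H(q)<\delta_H(4q)$, the equivalence just proved gives $\widetilde{\Pi}(H)=\pi(\ell)$.
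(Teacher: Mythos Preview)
Your overall architecture---reducing the inequality $\delta_H(q)<\delta_H(4q)$ to the containment $K\not\subseteq\varphi_{4q}(H)$ via CRT, then splitting into the easy ($\ell$ odd) and hard ($\ell$ even) directions---is sound and is indeed what underlies \cite[Theorem~2.17]{Density}. The paper itself gives no independent argument here: its proof is literally the one-line remark that the $\Z$-proof carries over. In that sense you have written \emph{more} than the paper does. Two problems, however, prevent your write-up from being accepted as it stands.

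\textbf{Misplaced citations.} You invoke Theorem~\ref{analogue217} twice: once to restrict the hard case to $n\in\{3,4\}$, and once to obtain $\pi(\ell)\subseteq\Pi(H)\cup\{2\}$. But Theorem~\ref{analogue217} is the degree-$2$ result (it lives in Section~\ref{case2}, treats $\SL(2,R)$, and involves the exceptional primes $2,3,5$). It says nothing about $n>2$. The statements you need are \cite[Theorem~2.13]{Density} (odd primes outside $\Pi(H)$ do not divide $\ell$, for $n>2$) and the discussion around \cite[(2.3)]{Density} (the exceptional prime $2$ arises only for $n\le 4$). Please replace both references accordingly.

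\textbf{The hard case is not actually argued.} When $\ell$ is even and $2\notin\Pi(H)$, you set up Goursat's lemma on $\varphi_\ell(H)$ and assert that the resulting nontrivial common quotient ``survives'' reduction to $\Gamma(n,\Z_4)\times\Gamma(n,\Z_q)$. The odd side is fine (Lemma~\ref{EasyWay} guarantees that every odd prime of $\ell$ already divides $q$, so nothing is lost there), but on the $2$-side you give no reason why passing from $\Z_{2^a}$ to $\Z_4$ preserves the obstruction. That step is exactly the substance of \cite[Theorem~2.17]{Density}: one must use the structure of normal subgroups of $\Gamma(n,\Z_{2^a})$ (via \cite[Theorem~2.5 and Lemmas~2.9--2.12]{Density}) to see that any common quotient with the odd part is already detected modulo $4$. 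Your final sentence ``the argument parallels \cite[\S 2]{Density}'' is, in effect, the same deferral the paper makes---so either cite \cite[Theorem~2.17]{Density} directly (as the paper does) or supply the missing structural argument.
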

\begin{proof}
It is straightforward to check that 
the proof of 
\cite[Theorem~2.17]{Density}, for 
arithmetic $H\leq \Gamma(n,\Z)$, carries
 over to the present setting.
\end{proof}
\begin{remark}\label{212Debug}
The proof of \cite[Theorem~2.17]{Density}
depends on \cite[Lemma~2.12]{Density}. We 
point out that the condition `$q>3$ odd' in 
this lemma is redundant (i.e., 
it should be changed to  `$q>1$').
\end{remark}

${\tt LevelMaxPCS}$ enables us 
to describe all congruence 
 quotients of $H\leq \Gamma(n,R)$, 
i.e., the congruence images 
$\varphi_m(H) = 
\varphi_m(\cl(H))$ for
$m$ coprime to $\mu$;
the procedure
$\tt PrimesFor$-$\tt Dense$~\cite[$\S 
\hspace{1pt} 3.2$]{Density}
(which accepts a transvection in 
 $H$ and a generating 
set for $H$)
 furnishes all congruence quotients 
 modulo maximal ideals of $R$.
To expand on the former comment,
we take $H$ to be a congruence subgroup,
of level $\ell$.
Let $a=\mathrm{gcd}(m,\ell)$, so
$m=abc$ where $\pi(b)\subseteq \pi(a)$ and
$\mathrm{gcd}(c,a ) = \mathrm{gcd}(c,\ell ) =1$.
Then $\varphi_m(H)$ is 
isomorphic to a subgroup of 
$\Gamma_{n,a}/\Gamma_{n,m}\times
\Gamma_{n,c}/\Gamma_{n,m}$.
Moreover, the first factor 
in this decomposition
 has a normal subgroup isomorphic 
to $\Gamma(n,\Z_c)$ that splits off
$\varphi_m(H)$.
Hence we may further assume that 
$\pi(m)\subseteq \pi(\ell)$. 
The congruence quotient
$H\Gamma_{n,a}/\Gamma_{n,m}$ is 
then an extension of the PCS  
of level $a$ in 
$\Gamma(n,\Z_m)$ by $\varphi_a(H)$.

Knowing the level of 
a congruence subgroup 
 $H\leq \Gamma(n, R)$, we 
can solve other problems 
for $H$, such as calculating 
$|\Gamma(n,R) :H|$,
and testing membership of 
$g \in \Gamma(n,R)$ in $H$
 (cf.~Grunewald and Segal's 
`explicitly given' arithmetic
$\Q$-groups~\cite[$\S 
 \hspace{.5pt} 3.1$]{Arithm}).

\subsection{Degree $2$}
\label{case2}

In this subsection, 
$R = \Z[1/\mu]$, $\mu$ a positive
integer, and $\Gamma=\SL(2,R)$.
For an integer $m>1$ coprime to
$\mu$, we denote the PCS $\ker \varphi_m$ 
of level $m$ in $\Gamma$  by $\Gamma_m$.

Let $H\leq \Gamma$ be dense
(and finitely generated). As always,
 $\cl(H)$ denotes the congruence 
closure of $H$ in $\Gamma$, with the 
standard assumption $R = R_H$.
The $S$-arithmetic closure of $H$ in 
$\Gamma$ is contained in $\cl(H)$, and
they can differ only when $R = \Z$.

Our major result for degree $2$ is 
the new specification of the input 
$\widetilde{\Pi}(H)$ to 
$\tt LevelMaxPCS$ 
(otherwise, the operation of this 
procedure remains entirely the same).
We isolate $2,3,5$ as 
\emph{exceptional} primes: 
$2$ and $3$ because $\SL(2,\Z_{p^2})$ 
for $p\in\{2,3\}$ has proper 
supplements to its PCS of level $p$,
and $2,3,5$ because there can be
non-trivial subdirect products of
$\SL(2,\Z_p)$ with subgroups of 
$\SL(2,\Z_r)$ for $p\in\{2,3,5\}$ 
and prime $r\neq p$.
(To reiterate: in degrees greater than
$2$, only $2$ can be an exceptional prime,
and then only in degree $3$ or $4$.)
\begin{lemma}
\label{analogue29}
Let $G=\SL(2,\Z_{p^a})$, $p$ prime.
\begin{itemize}
\item[{\rm (i)}] If $p>3$ then each
proper normal subgroup 
of $G$ lies in the solvable radical 
of $G$, which is the complete inverse 
image of $Z(\SL(2,p))$
under $\varphi_p$.
\item[{\rm (ii)}]  If $p=2$ or $3$ then
$G$ is solvable, and again 
has a unique maximal normal subgroup.
\end{itemize}
\end{lemma}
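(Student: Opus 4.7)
My plan is to reduce both parts to the short exact sequence $1\to K\to G\to \SL(2,p)\to 1$, where $K=\ker\varphi_p$ consists of matrices of the form $1_2+pM\bmod p^a$ and is therefore a $p$-group.

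For part (i), $p>3$: I would set $R:=\varphi_p^{-1}(Z(\SL(2,p)))$ and observe that $R$ is an extension of $\{\pm 1_2\}$ by the solvable group $K$, hence solvable; since $G/R\cong\PSL(2,p)$ is simple and non-abelian, this identifies $R$ as the solvable radical. For a proper normal subgroup $N\leq G$, the image $\varphi_p(N)$ lies in the normal-subgroup lattice of $\SL(2,p)$, which for $p>3$ is $\{1,\{\pm 1_2\},\SL(2,p)\}$. In the first two cases $N\leq R$, as desired. In the third case $G=NK$, so $G/N$ is a quotient of the $p$-group $K$; the contradiction will come from the perfectness of $G$ when $p>3$, which follows from the transvection-commutator identity $t_{ij}(a)=[\mathrm{diag}(c,c^{-1}),t_{ij}(a/(c^2-1))]$ for $c\in \Z_{p^a}^{\times}$ with $c^2\not\equiv 1\bmod p$ (available exactly when $p>3$).

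For part (ii), $p\in\{2,3\}$: The solvability of $G$ is immediate, since $\SL(2,2)\cong S_3$ and $\SL(2,3)$ (the binary tetrahedral group) are solvable. I would then use that each of these has a unique maximal normal subgroup $M_p$ (namely $A_3$ and $Q_8$) of index $p$, so $N_0:=\varphi_p^{-1}(M_p)$ is maximal normal in $G$. For uniqueness, any maximal normal subgroup $N$ has $G/N$ cyclic of prime order $q$: if $q\neq p$ then $K\leq N$ and $N/K$ would be a normal subgroup of $\SL(2,p)$ of prime index $q\neq p$, ruled out in both $S_3$ and $\SL(2,3)$. So $q=p$, and $N$ is pinned down by a hyperplane of $G^{\mathrm{ab}}/pG^{\mathrm{ab}}$. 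The crux is that this $\F_p$-space is one-dimensional. My plan here is to exhibit $G$ as generated by the two transvections $t_{12}(1)$ and $t_{21}(1)$ (using $t_{ij}(1)^n=t_{ij}(n)$ and generation of $\SL(2,\Z_{p^a})$ by elementary matrices), and then observe that conjugation by the Weyl element $w=\bigl(\begin{smallmatrix}0&-1\\1&0\end{smallmatrix}\bigr)$ sends $t_{12}(1)$ to $t_{21}(-1)=t_{21}(1)^{-1}$. The two generators thus project to mutual inverses in $G^{\mathrm{ab}}$, so $G^{\mathrm{ab}}$ is cyclic of order dividing the order $p^a$ of $t_{12}(1)$, giving a single subgroup of index $p$.

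The main obstacle is the generation statement used in part (ii): one must verify that $t_{12}(1)$ and $t_{21}(1)$ generate the full group $\SL(2,\Z_{p^a})$, not just its residue quotient $\SL(2,p)$. This is a routine consequence of generation of $\SL(2)$ by elementary matrices over a local ring, but it is the pivot on which the cyclicity of $G^{\mathrm{ab}}$ — and hence the entire uniqueness argument — rests.
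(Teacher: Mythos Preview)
Your proof is correct and takes a genuinely different route from the paper. For (i) the paper simply defers to \cite[Lemma~2.9]{Density}; for (ii) it verifies the cases $p^a\in\{2,3,4,9\}$ by direct calculation and, for larger $a$, observes that a second maximal normal subgroup would furnish a proper supplement to the PCS of level $p^{a-1}$ inside $\SL(2,\Z_{p^a})$, which is ruled out by \cite[Theorem~2.5]{Density}. Your argument is self-contained: in (i) you establish perfectness of $G$ via the explicit transvection--commutator identity, and in (ii) you show that $G^{\mathrm{ab}}$ is cyclic by exhibiting two generators of $G$ that become mutual inverses in the abelianization. The paper's version is short because it leans on prior work; yours avoids any dependence on the supplement theorem and works uniformly in $a$. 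One remark: the generation fact you flag as the ``main obstacle'' in (ii) is equally needed in (i), since passing from ``every transvection is a commutator'' to ``$G$ is perfect'' requires that transvections generate $G$; this follows at once from surjectivity of $\SL(2,\Z)\twoheadrightarrow\SL(2,\Z_{p^a})$ together with the classical fact that $\SL(2,\Z)=\langle t_{12}(1),\,t_{21}(1)\rangle$.
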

\begin{proof}
(i) \hspace{-.5pt} Cf.~the proof of 
 \cite[Lemma~2.9]{Density}.

(ii) \hspace{-.5pt}
 This can be checked by direct 
calculation for
$p^a\in\{2,3,4,9\}$. 
For larger $a$, 
any other maximal normal subgroup 
would be a proper supplement to 
the PCS modulo $p^{a-1}$ in $G$,
violating \cite[Theorem~2.5]{Density}.
\end{proof}

\begin{lemma}
\label{analogue2.11}
If $q>5$ is a prime power 
and $p\neq q$ is a  prime, then 
$\SL(2,p)$ does not have $\PSL(2,q)$
as a section.
\end{lemma}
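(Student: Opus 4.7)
The plan is to deduce that any such section would also realize $\PSL(2,q)$ as a section of $\PSL(2,p)$, and then reach a contradiction via Dickson's classification of the subgroups of $\PSL(2,p)$.

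For the reduction, I would suppose $\PSL(2,q)\cong K/N$ with $N\triangleleft K\leq \SL(2,p)$. Setting $Z=Z(\SL(2,p))$, the subgroup $N(K\cap Z)$ is normal in $K$ (as the product of two normal subgroups), and the maximality of $N$ forces it to equal $N$ or $K$. Equality with $K$ would express $K/N$ as a quotient of the central, hence abelian, group $K\cap Z$, which is impossible since $\PSL(2,q)$ is non-abelian simple for $q>5$. Thus $K\cap Z\leq N$, and the projection $\pi\colon \SL(2,p)\to \PSL(2,p)$ induces an isomorphism $K/N\cong \pi(K)/\pi(N)$, exhibiting $\PSL(2,q)$ as a section of $\pi(K)\leq \PSL(2,p)$.

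The cases $p\in\{2,3\}$ are immediate on order grounds, since $|\PSL(2,p)|\leq 6$ while $|\PSL(2,q)|\geq 168$. For $p\geq 5$, I would invoke Dickson's classification: the subgroups of $\PSL(2,p)$ are cyclic, dihedral, $A_4$, $S_4$, $A_5$, affine (Borel), or $\PSL(2,p)$ itself. The only non-abelian simple sections arising from this list are $A_5$ and $\PSL(2,p)$.

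The contradiction then comes from two classical isomorphism facts. The relation $\PSL(2,q)\cong A_5$ forces $q\in\{4,5\}$, which is ruled out by $q>5$. The relation $\PSL(2,q)\cong \PSL(2,p)$ for prime powers $q,p\geq 4$ forces $q=p$ except for the exceptional pair $\{4,5\}$, so $q>5$ again gives $q=p$, contradicting $p\neq q$. The most delicate step is the initial center-killing reduction from $\SL(2,p)$ to $\PSL(2,p)$; after that, Dickson's theorem and the isomorphism classification of $\PSL(2,q)$ close the argument with no further computation.
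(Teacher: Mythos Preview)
Your proof is correct and follows essentially the same route as the paper: the paper's proof is a one-line appeal to Dickson's list of subgroups of $\PSL(2,p)$ together with the simplicity of $\PSL(2,q)$, and you have simply unpacked that appeal in full detail, including the (standard but not entirely trivial) passage from $\SL(2,p)$ to $\PSL(2,p)$ and the explicit elimination of each subgroup type. There is no substantive difference in strategy.
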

\begin{proof}
(Cf.~\cite[Lemma 2.11]{Density}.)
The assertion is evident from Dickson's 
list of the subgroups of 
$\PSL(2,p)$~\cite[Hauptsatz~8.27, p.~213]{Huppert}, 
and simplicity of $\PSL(2,q)$.
\end{proof}

We shall use Lemmas~\ref{analogue29} 
and \ref{analogue2.11} in place of
\cite[Lemmas~2.9, 2.11]{Density} to
prove degree $2$ versions of
\cite[Lemma~2.12]{Density} (amended;
see Remark~\ref{212Debug})
 and \cite[Theorem~2.13]{Density}. 
\begin{lemma}[cf.~{\cite[Lemma~2.12]{Density}}]
\label{NewKernelcor}
Let $a$, $k$ be positive integers,
and let $p>5$ be a prime not dividing $k$.
Suppose that $U\le\SL(2,\Z_{p^ak})$
maps onto $\SL(2,\Z_{p^a})$ under  
the projection $\varphi_{p^a}$
of $\SL(2,\Z_{p^ak})\cong 
\SL(2,\Z_{p^a})\times\SL(2,\Z_k)$
onto its first factor.
Then $U$ contains $\SL(2,\Z_{p^a})$.
\end{lemma}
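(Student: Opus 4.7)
The plan is to identify $\SL(2,\Z_{p^a})$ with $\SL(2,\Z_{p^a})\times\{1\}$ inside the direct product decomposition and to show that
\[
N := \{\,x \in \SL(2,\Z_{p^a}) : (x,1) \in U\,\}
\]
exhausts the first factor. First I would verify that $N$ is normal in $\SL(2,\Z_{p^a})$: given $x \in \SL(2,\Z_{p^a})$, the surjectivity hypothesis supplies some $y \in \SL(2,\Z_k)$ with $(x,y) \in U$, and conjugating any $(n,1) \in U$ by $(x,y)$ stays in $U \cap (\SL(2,\Z_{p^a}) \times \{1\})$, giving $xnx^{-1} \in N$.

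Suppose, for contradiction, that $N$ is a proper subgroup of $\SL(2,\Z_{p^a})$. Since $p>5$, Lemma~\ref{analogue29}(i) places $N$ inside the solvable radical $\varphi_p^{-1}(Z(\SL(2,p)))$, whose quotient is $\PSL(2,p)$. Hence $\SL(2,\Z_{p^a})/N$ surjects onto $\PSL(2,p)$. On the other hand, Goursat's lemma applied to $U$ inside the ambient direct product identifies $\SL(2,\Z_{p^a})/N$ with a section of the second factor $\SL(2,\Z_k)$. Consequently, $\PSL(2,p)$ appears as a section of $\SL(2,\Z_k)$.

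The main obstacle, which I expect to require the most care, is to push this section down to one of $\SL(2,q)$ for a single prime $q \mid k$. By the Chinese Remainder Theorem, $\SL(2,\Z_k)$ decomposes as $\prod_q \SL(2,\Z_{q^b})$ over the prime divisors $q$ of $k$, all distinct from $p$. A Goursat argument, exploiting the fact that $\PSL(2,p)$ is non-abelian simple (since $p>5$), reduces a section of a direct product of groups to a section of one of the factors: in each step, the image of the intersection of the section with a factor is normal in the simple section, and so is either trivial or surjective. Iterating yields that $\PSL(2,p)$ is a section of some single $\SL(2,\Z_{q^b})$. The kernel of the surjection $\SL(2,\Z_{q^b}) \to \SL(2,q)$ is a $q$-group and cannot have $\PSL(2,p)$ as a section (its order is divisible by $p \neq q$); the same Goursat-type argument then forces $\PSL(2,p)$ to be a section of $\SL(2,q)$ itself.

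But this contradicts Lemma~\ref{analogue2.11}, applied with the prime power taken to be our $p>5$ and the prime taken to be $q \neq p$. Hence $N = \SL(2,\Z_{p^a})$, so $U$ contains the entire first factor.
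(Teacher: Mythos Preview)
Your argument is correct and uses the same ingredients as the paper's proof: Goursat's lemma, Lemma~\ref{analogue29} to extract $\PSL(2,p)$ as the unique simple top quotient of $A=\SL(2,\Z_{p^a})$, and Lemma~\ref{analogue2.11} for the final contradiction. The only difference is organizational: the paper reduces to a single prime divisor of $k$ by recursing on $U$ itself (showing $\varphi_{p^ar^b}(U)=A\times\varphi_{r^b}(U)$ and then replacing $U$ by $U\cap\SL(2,\Z_{p^as})$ for $k=r^bs$), whereas you work directly at the level of sections, using that a non-abelian simple section of a direct product---or of an extension by a $q$-group with $q\neq p$---is already a section of one factor.
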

\begin{proof}
By hypothesis, $U$ is a subdirect product 
of $A=\SL(2,\Z_{p^a})$ with 
$B = \allowbreak \varphi_k(U)$, i.e.,
$U \leq A\times B$ and the 
component projections of $AB$ onto $A$ and
$B$ restricted to $U$ are both surjective. 
We prove that $B\leq U$; then  
$A\leq U = A\times B$.

Suppose that $k=r^b$ for a prime $r$.
If $B\not \leq U$ then $A$ and $B$ have 
isomorphic non-trivial quotients. 
By Lemma~\ref{analogue29}, any 
non-trivial quotient of $A$ has 
a quotient isomorphic to $\PSL(2,p)$; 
so $r>3$ and $\PSL(2,p)$ is a
section of 
$\varphi_r(\SL(2,\Z_k)) = \SL(2,\Z_r)$.
But this contradicts 
Lemma~\ref{analogue2.11}.

Suppose now that $k=r^b s$ 
where $r$ is a prime $r$
not dividing $s$. 
Then $\varphi_{p^ar^b}(U)=
A \times \varphi_{r^b}(U)$ 
by the previous paragraph.
Therefore $\varphi_{p^a}$
maps $W:=U\cap \hspace{.5pt}
\SL(2,\Z_{p^as})$ onto $A$. 
We replace $U$ by $W$, and $k$ by 
$s$, then recurse.
After exhausting all prime 
divisors of $k$, we reach 
a subgroup of $U$ that contains
$A$. 
\end{proof}

\begin{theorem}[cf.~{\cite[Theorem~2.13]{Density}}]
\label{CarryOver213}
Let $H\leq \Gamma$ be dense, of
level $\ell >1$.
Suppose that $\varphi_p(H)=\SL(2,p)$ 
for some prime $p>5$. Then $p\nmid \ell$.
\end{theorem}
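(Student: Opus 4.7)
The plan is to argue by contradiction. Suppose $p \mid \ell$ and write $\ell = p^a k$ with $a \geq 1$ and $\gcd(p,k) = 1$. Under the Chinese Remainder decomposition $\SL(2,\Z_\ell) \cong \SL(2,\Z_{p^a}) \times \SL(2,\Z_k)$, set $U := \varphi_\ell(H)$. The endgame is to apply Lemma~\ref{NewKernelcor} to conclude $U \supseteq \SL(2,\Z_{p^a}) \times \{1\}$; pulling this containment back under $\varphi_\ell$ yields $\cl(H) = H\Gamma_\ell = \varphi_\ell^{-1}(U) \supseteq \Gamma_k$, and this will contradict the defining property of the level, because $\Gamma_m \leq \cl(H)$ forces $\ell \mid m$ whereas here $k$ is a proper divisor of $\ell$.

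The key preparatory step is to verify the hypothesis of Lemma~\ref{NewKernelcor}, namely that $\varphi_{p^a}(H) = \SL(2,\Z_{p^a})$. The given hypothesis only provides that $V := \varphi_{p^a}(H)$ supplements the kernel $K_1 := \ker(\SL(2,\Z_{p^a}) \to \SL(2,p))$ in $\SL(2,\Z_{p^a})$. To upgrade this supplement to equality, I would proceed by downward induction on $i \in \{1,\ldots,a-1\}$, proving that no proper subgroup of $\SL(2,\Z_{p^a})$ can supplement $K_i := \ker(\SL(2,\Z_{p^a}) \to \SL(2,\Z_{p^i}))$. The base case $i = a-1$ is \cite[Theorem~2.5]{Density}. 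For the inductive step: if $VK_i = \SL(2,\Z_{p^a})$, then reducing modulo $K_{i+1}$ shows that the image of $V$ supplements the top PCS of $\SL(2,\Z_{p^{i+1}})$; a second appeal to \cite[Theorem~2.5]{Density} inside that quotient produces $VK_{i+1} = \SL(2,\Z_{p^a})$, whereupon the inductive hypothesis at $i+1$ forces $V = \SL(2,\Z_{p^a})$.

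With $\varphi_{p^a}(H) = \SL(2,\Z_{p^a})$ established, Lemma~\ref{NewKernelcor} delivers $U \supseteq \SL(2,\Z_{p^a}) \times \{1\}$ at once. The preimage of this direct factor under $\varphi_\ell$ is precisely $\Gamma_k$, so $\cl(H) \supseteq \Gamma_k$ as planned, producing the desired contradiction.

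The main obstacle will be the promotion step from $\varphi_p(H) = \SL(2,p)$ to $\varphi_{p^a}(H) = \SL(2,\Z_{p^a})$; everything else flows routinely from Lemma~\ref{NewKernelcor} and the characterization of the level of $H$.
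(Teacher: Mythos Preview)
Your proposal is correct and follows essentially the same route as the paper's proof: promote $\varphi_p(H)=\SL(2,p)$ to $\varphi_{p^a}(H)=\SL(2,\Z_{p^a})$ via repeated use of \cite[Theorem~2.5]{Density}, then apply Lemma~\ref{NewKernelcor} to obtain $\Gamma_k\leq \cl(H)$, contradicting the minimality of the level. The only cosmetic difference is that the paper organizes the promotion step as a straightforward upward induction on $k$ (showing $\varphi_{p^k}(H)=\SL(2,\Z_{p^k})$ for all $k\geq 1$), whereas you phrase it as a downward induction on $i$ inside the fixed quotient $\SL(2,\Z_{p^a})$; the two unwindings are equivalent.
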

\begin{proof}
Assume that $\varphi_{p^{k-1}}(H)=
 \SL(2,\Z_{p^{k-1}})$ for some $k\geq 2$, 
i.e., $\varphi_{p^k}(H)$ supplements 
$\ker \varphi_{p^{k-1}}$ in
$\SL(2,\Z_{p^k})$. Then
$\varphi_{p^k}(H)=\SL(2,\Z_{p^k})$
by \cite[Theorem~2.5]{Density}.
This proves that 
 $\varphi_{p^k}(H)=\SL(2,\Z_{p^k})$
 for all $k\geq 1$, by induction.

Let $\ell =p^a q$ where 
$\mathrm{gcd}(p, q)=1$.
By Lemma~\ref{NewKernelcor},
$\Gamma_{q}\leq  H\hspace{.3pt} 
\Gamma_\ell = \cl(H)$.
Since $\ell$ divides the level of 
every PCS in $\cl(H)$,
necessarily $a=0$.
\end{proof}

By Theorem~\ref{CarryOver213},
 $\pi(\ell)\setminus \{2,3,5\}
\subseteq \Pi(H)$. We knew already
(Lemma~\ref{EasyWay}) that 
$\Pi(H)\subseteq \pi(\ell)$.
Now we seek the exceptional primes
in $\pi(\ell)$.
Toward this end, define $r_q= r_q(H)$ 
for a prime $q$ to be the product of
 all primes greater than $q$ that 
divide $\ell$; when no such primes 
exist, $r_q=1$. 

\begin{theorem}[cf.{~\cite[Theorem~2.17]{Density}}]
\label{analogue217}
Let $H\leq \Gamma$ be dense, of level $\ell>1$. 
Suppose that $p\not \in \Pi(H)$, where
$p$ is one of $\, 2,3$, or $5$.
Then $p\in \pi(\ell)$ if and only if 
\begin{itemize}
\item[{\rm (i)}] 
$\delta_H(r_5)<\delta_H(5r_5)$, for $p=5;$
\item[{\rm (ii)}] 
$\delta_H(4)<\delta_H(36)$ or 
$\delta_H(2r_3)<\delta_H(6r_3)$, 
 for $p=3;$
\item[{\rm (iii)}] 
$\delta_H(9)<\delta_H(36)$ or 
$\delta_H(3r_3)<\delta_H(6r_3)$, 
 for $p=2$.
\end{itemize}
\end{theorem}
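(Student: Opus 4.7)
The plan is to mirror the proof of Theorem \ref{TildePiSmallPi} (cf.~\cite[Theorem~2.17]{Density}), substituting Lemmas \ref{analogue29}, \ref{analogue2.11}, \ref{NewKernelcor} for their counterparts in \cite{Density}, and accommodating the new exceptional primes $3$ and $5$. The uniform step will be to recast each inequality $\delta_H(m)<\delta_H(pm)$ with $\mathrm{gcd}(p,m)=1$ as a statement about the subdirect product $\varphi_{pm}(\cl(H))\leq \SL(2,\Z_p)\times\SL(2,\Z_m)$: since $\cl(H)=H\Gamma_\ell$, the factor $\varphi_{pm}(\Gamma_\ell)$ is the kernel of the projection $\SL(2,\Z_{pm})\to\SL(2,\Z_{\mathrm{gcd}(pm,\ell)})$. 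Hence when $p\nmid\ell$ this kernel equals $\SL(2,\Z_p)\times\{1\}$, forcing $\varphi_{pm}(\cl(H))=\SL(2,\Z_p)\times\varphi_m(\cl(H))$ and $\delta_H(pm)=\delta_H(m)$. When $p\mid\ell$, proper subdirectness is witnessed (Goursat) by a common non-trivial quotient of $\varphi_p(\cl(H))$ and $\varphi_m(\cl(H))$, which I would produce using the quotient structure of $\SL(2,\Z_{p^a})$ from Lemma \ref{analogue29}.

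For (i), I would take $m=r_5$. The primes dividing $r_5$ are exactly the primes greater than $5$ in $\Pi(H)$ (Theorem \ref{CarryOver213}), so $\mathrm{gcd}(5,r_5)=1$. If $5\mid\ell$ then $\mathrm{gcd}(5r_5,\ell)=5r_5$, whence $\varphi_{5r_5}(\Gamma_\ell)=1$ and $\varphi_{5r_5}(\cl(H))=\varphi_{5r_5}(H)$; Lemma \ref{analogue29}(i) applied to $\SL(2,\Z_{5^a})$, together with minimality of $\ell$, then produces the required common quotient with $\varphi_{r_5}(\cl(H))$, while Lemma \ref{analogue2.11} rules out contributions coming from the primes greater than $5$ interfering with the other direction.

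For (ii) and (iii), two independent phenomena may force $p\in\{2,3\}$ into $\pi(\ell)$. The first is $p^2\mid\ell$, detectable only through the exceptional supplement structure of $\SL(2,\Z_{p^2})$ from Lemma \ref{analogue29}(ii); because $\SL(2,\Z_4)$ and $\SL(2,\Z_9)$ are both solvable and share many common quotients, I would pair $p$ with the other exceptional prime in $\{2,3\}\setminus\{p\}$, using the combined modulus $36=4\cdot 9$, so that the $\delta$-jump from $4$ to $36$ (resp.~from $9$ to $36$) isolates the contribution from $\SL(2,\Z_9)$ (resp.~$\SL(2,\Z_4)$). The second is $p\mid\ell$ via subdirect twisting across the remaining primes dividing $\ell$, detected by the jump from $2r_3$ to $6r_3$ (resp.~from $3r_3$ to $6r_3$); here Lemma \ref{analogue2.11} ensures that no prime greater than $5$ contributes to the common quotient, and Lemma \ref{NewKernelcor} yields a direct product structure off the exceptional primes $\{2,3,5\}$.

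The hardest step will be verifying that these two alternative conditions are jointly necessary and sufficient for $p\mid\ell$ in (ii) and (iii): this requires a finite but delicate enumeration of the admissible common quotients in the Goursat decomposition, invoking Dickson's classification (as in Lemma \ref{analogue2.11}) to rule out unexpected shared simple sections of $\SL(2,\Z_4)$, $\SL(2,\Z_9)$ and $\SL(2,r)$ for primes $r\neq p$, and checking that the abelian common quotients $C_2,C_3$ (and $S_3, A_4, S_4, A_5$) are all accounted for by exactly one of the two $\delta$-tests. Once this enumeration is complete, the equivalences in (ii) and (iii) will follow in parallel to (i), with $r_3$ playing the role of $r_5$.
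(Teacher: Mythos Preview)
Your broad strategy---reading each $\delta$-inequality as a statement about a subdirect product inside $\SL(2,\Z_p)\times\SL(2,\Z_m)$ and then analysing common quotients---is the right framework, and your treatment of the easy direction ($p\nmid\ell\Rightarrow$ equality) is fine. But two things in your outline would send you off course.

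First, you are missing the paper's main engine. Rather than a Goursat enumeration of all admissible common quotients (your proposed ``finite but delicate enumeration''), the paper introduces and uses Lemma~\ref{sudilem}: if $U\lneq A\times B$ is a proper subdirect product and $A/(U\cap A)$ has a \emph{unique} maximal normal subgroup $M/(U\cap A)$, then proper subdirectness survives after passing to $A/M\times B/N$ for the corresponding $N$. Combined with Lemma~\ref{analogue29} (unique maximal normal subgroup of $\SL(2,\Z_{p^a})$), this lets one descend from a psp at modulus $\ell$ directly to a psp at the small modulus appearing in the $\delta$-test, with no case analysis of shared sections. Lemma~\ref{NewKernelcor}, which you plan to invoke, is not used here; it was the tool for Theorem~\ref{CarryOver213}.

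Second, your reading of the two alternatives in (ii) and (iii) is not what the argument needs. The first test (e.g.\ $\delta_H(4)<\delta_H(36)$ for $p=3$) is not detecting ``$p^2\mid\ell$''; it is detecting any entanglement between the $2$-part and the $3$-part of $\varphi_{36}(H)$. The paper's dichotomy is different: either one of $\delta(4)<\delta(36)$, $\delta(9)<\delta(36)$ already holds, or else $\delta(4)=\delta(9)=\delta(36)$, which forces $\delta(36)=1$ (since $\delta(36)\ge\delta(4)\delta(9)$). In that second branch, \cite[Theorem~2.5]{Density} lifts surjectivity to all $\{2,3\}$-moduli, so $\varphi_\ell(H)$ is a psp of $\SL(2,\Z_s)$ with $\varphi_{\ell/s}(H)$ ($s$ the $\{2,3\}$-part of $\ell$); Lemma~\ref{sudilem} then pushes this down to a psp of $\SL(2,p)$ with $\varphi_{r_3t/p}(H)$, giving the second $\delta$-inequality. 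Your framework would eventually get there, but via a longer and more error-prone route; aligning with Lemma~\ref{sudilem} and the $\delta(36)=1$ dichotomy gives a clean two-line argument in place of the enumeration.
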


We use Theorem~\ref{analogue217} 
to find the set $E$ of
all $p\in \{2,3,5\}$ such that
$\delta_H(p)=1$ and 
$p\hspace{1.5pt} | 
\hspace{1.5pt}\ell$.
Then $\pi(\ell) = 
\widetilde{\Pi}(H):= 
\Pi(H)\cup \allowbreak E$ 
 (cf.~\cite[(2.3)]{Density}). 

One more lemma is required for the proof 
of Theorem~\ref{analogue217}.
 In the sequel, `psp' is an 
abbreviation of `proper subdirect 
 product'. 
\begin{lemma}\label{sudilem}
Let $A$ and $B$ be groups, 
and let $U \lneq A\times B$ be a 
psp of $A$ with $B$. 
Suppose that $A/(U\cap A)$ has a unique
maximal normal subgroup $M/(U\cap A)$.
Then $B/(U\cap B)$ has a unique maximal 
normal subgroup $N/(U\cap B)$ 
such that $UMN/MN$ is a psp of $A/M$ 
with $B/N$.
\end{lemma}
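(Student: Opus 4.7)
The plan is to prove this via Goursat's lemma: set up a canonical isomorphism between $A/(U\cap A)$ and $B/(U\cap B)$, and then transport the unique maximal normal subgroup across it.

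First I would recall the Goursat setup. Identifying $A$ with $A\times\{1\}$ and $B$ with $\{1\}\times B$ inside $A\times B$, both $U\cap A$ and $U\cap B$ are normal in $A\times B$ (and hence in $A$ and $B$). Because $U$ projects onto both direct factors, there is a canonical isomorphism
\[
\phi\colon A/(U\cap A)\longrightarrow B/(U\cap B)
\]
determined by $\phi(a(U\cap A))=b(U\cap B)$ whenever $(a,b)\in U$.

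Next I would define $N\leq B$ by the requirement $N/(U\cap B):=\phi(M/(U\cap A))$. Since $\phi$ is a group isomorphism, it sends the unique maximal normal subgroup $M/(U\cap A)$ of $A/(U\cap A)$ to the unique maximal normal subgroup of $B/(U\cap B)$; this delivers $N$ and its stated uniqueness in one stroke.

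For the psp assertion, I would observe that under the identifications above $MN=M\times N$ in $A\times B$, so $UMN/MN$ is the image of $U$ under the quotient map $A\times B\twoheadrightarrow A/M\times B/N$. Because $\phi$ carries $M/(U\cap A)$ onto $N/(U\cap B)$, it descends to an isomorphism $\overline{\phi}\colon A/M\rightarrow B/N$, and a direct unwinding of the characterization of $\phi$ shows that the image of $U$ in $A/M\times B/N$ is exactly the graph of $\overline{\phi}$. This graph is a proper subdirect product, since $A/M$ is non-trivial (the hypothesis on $M/(U\cap A)$ forces $M\lneq A$). The only real obstacle is notational bookkeeping: keeping track of the identifications $A\cong A\times\{1\}$, $B\cong\{1\}\times B$, and $MN=M\times N$ inside $A\times B$; once these are in place, everything reduces to a routine application of Goursat's lemma.
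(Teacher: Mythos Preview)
Your proposal is correct and follows essentially the same approach as the paper: both invoke the Goursat isomorphism $\phi$ (the paper calls it $\theta$), define $N$ by transporting $M$ across it, and then verify that $UMN/MN$ is a proper subdirect product. The only cosmetic difference is in the verification of properness---you identify $UMN/MN$ as the graph of the induced isomorphism $\overline{\phi}\colon A/M\to B/N$ between non-trivial groups, whereas the paper argues by contradiction that $UMN\neq A\times B$---but these are two phrasings of the same observation.
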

\begin{proof}
There is an isomorphism 
$\theta \colon A/(U\cap A)\rightarrow 
B/(U\cap B)$ such that
\[
U = \{ ab \; |\;  a\in A, \, b\in B, \, 
\theta (a(U\cap A)) =
b(U\cap B)\}.
\]
For $X=A$ or $B$, let 
$\rho_X\colon A\times B \rightarrow X$
be the canonical projection map. By
assumption, $\rho_X(U) = X$.

Define $N\leq B$
by $N/(U\cap B)= \theta(M/(U\cap A))$.
Set $K=MN$.
Clearly $UK/K$ is a subdirect 
product of $A/M$ with $B/N$.

Suppose that $UK = AB$. 
Let $a\in A$, so $a = uxy$ for 
 $u\in U$, $x\in M$, and 
$y\in \allowbreak N$. 
Then $1 = \rho_B(u)y$, which implies
that $\rho_B(u) \in N$.
Therefore $\rho_A(u) \in M$. 
However, $a = \allowbreak \rho_A(u)x$,
yielding the contradiction 
$A\subseteq M$. 
\end{proof}

\noindent \emph{Proof of 
Theorem{\em ~\ref{analogue217}}}.
We fix some notation: $\delta=\delta_H$,
$t = \mathrm{gcd}(6,\ell)$, and
$s$ is the $\{2,3\}$-part of $\ell$.

For each $p\in \{2,3,5\}$,
it is not difficult to prove that 
$p\hspace{1.25pt} | \hspace{1.25pt} \ell$
if the stated condition on 
$\delta$-values holds.
We proceed to establish the converse.

Let $\ell=5^e s q$ for 
$e\ge 1$ and $q$ coprime to $30$. 
By \cite[Theorem~2.5]{Density}
and the definition of level,
$\varphi_\ell(H)$ is a psp of 
$\varphi_{5^e}(H) = \SL(2,\Z_{5^e})$ 
with $\varphi_{qs}(H)$.
Recall from Lemma~\ref{analogue29} 
that $\SL(2,\Z_{r})$ for any
prime-power $r$ has a unique maximal 
normal subgroup. 
By Lemma~\ref{sudilem}, then, 
$\varphi_{5r_5t}(H)$ is a psp of 
$\SL(2,5)$ with $\varphi_{r_5 t}(H)$. 
Since $\SL(2,\Z_{t})$ does not have 
$\PSL(2,5)$ as a section, 
 $\varphi_{5r_5}(H)$ must be a psp 
 of $\SL(2,5)$ with $\varphi_{r_5}(H)$.
Thus $\delta(r_5)\neq \delta(5r_5)$. 

Now let $p\in \{2,3\}$ be a divisor 
of $\ell$. 
If $\delta(4)=\delta(36)$
and $\delta(9)=\delta(36)$,
then $\delta(36) = \allowbreak 1$;
in turn, by \cite[Theorem~2.5]{Density},
 $\varphi_{m}(H) = \allowbreak \SL(2,\Z_{m})$
for all $\{2,3\}$-numbers $m$.
Consequently $\varphi_\ell(H)$ is a 
psp of $\SL(2,\Z_s)$ with 
$\varphi_{\ell/s}(H)$.
Reasoning as in the $p=5$ case (so 
appealing to 
Lemma~\ref{sudilem} again), we infer 
that $\varphi_{tr_3}(H)$ is a psp of 
$\SL(2,p)$ with $\varphi_{r_3t/p}(H)$.
The inequalities
$\delta(6r_3)> \delta(3r_3)$ for $p=2$ 
and $\delta(6r_3)>\delta(2r_3)$ for 
$p=3$ follow. \hfill $\Box$

\vspace{10pt}

We illustrate Theorem~\ref{analogue217} 
with a few examples in $\Gamma = \SL(2,\Z)$.
The groups $G_i$ below are dense by 
\cite[Proposition~3.7]{SAT}.

Let
\[
G_1=\Big\langle \hspace{1pt}
{\small\begin{bmatrix}
946549&-1531420\\
-28215&45649\\
\end{bmatrix}}, \hspace{1pt}
{\small\begin{bmatrix}
281823&-631463\\
181643&-406996\\
\end{bmatrix}}, \hspace{1pt}
{\small\begin{bmatrix}
47149&1057765\\
32466&728359\\
\end{bmatrix}} \hspace{1pt}
\Big\rangle .
\]
 $\tt PrimesForDense$ 
(see \cite[$\S \hspace{1pt} 3.2$]{Density})
returns $\Pi(G_1)=\{11\}$, i.e., $r_5 = 11$.
We compute that 
$\delta_{G_1}(11)=11$ 
and $\delta_{G_1}(36)=1$, so
$\delta_{G_1}(4)=\delta_{G_1}(9)=1$.
Since $\delta_{G_1}(55)=1320$, 
Theorem~\ref{analogue217}~(i) tells
us that $5\in \widetilde{\Pi}(G_1)$. 
Thus $r_3=55$. Since
$\delta_{G_1}(2.55)=\delta_{G_1}(3.55)=
\delta_{G_1}(6.55)=1320$, 
we have $\widetilde{\Pi}(G_1) = \{ 5,11\}$
by Theorem~\ref{analogue217}~(ii), (iii).

Next, let
\[
G_2=\Big\langle \hspace{1pt}
{\small\begin{bmatrix}
-11&-105\\
\phantom{-}2&\phantom{-}19\\
\end{bmatrix}}, \hspace{1pt}
{\small\begin{bmatrix}
\ 9&161\\
\ 1&18\\
\end{bmatrix}}, \hspace{1pt}
{\small\begin{bmatrix}
-4&-17\\
\phantom{-}1&\phantom{-}4\\
\end{bmatrix}}  \hspace{1pt}
\Big\rangle .
\]
Then $\Pi(G_2)=\emptyset$: 
the level $\ell$ of $G_2$ is
a $\{2,3,5\}$-number. 
Thus $r_5=1$. Since
$\delta_{G_2}(5)=1$, 
$\ell$ is a $\{2,3\}$-number 
by Theorem~\ref{analogue217}~(i).
We find that  
$\delta_{G_2}(4)=\allowbreak 4$, 
$\delta_{G_2}(6)=\allowbreak 1$,
$\delta_{G_2}(9)=\allowbreak 27$,
and $\delta_{G_2}(36)=108$.
 Hence
$\widetilde{\Pi}(G_2)=\{ 2,3\}$ by
(ii) and (iii) of Theorem~\ref{analogue217}.
(Note that $r_3=\allowbreak 1$, so
$\delta_{G_2}(2r_3)=\delta_{G_2}(3r_3)=
\delta_{G_2}(6r_3)=1$.)

For a third example, let
\begin{eqnarray*}
&G_3=\Big\langle \hspace{1pt}
{\scriptsize
\begin{bmatrix}
-23653189915040069676&78121325235187312055\\
-9796772300912447255&32356601283956634324 
\end{bmatrix}}, \hspace{1pt}
& \\
&{\scriptsize
\begin{bmatrix}
-1557884485479161873562328016576807435&3819960293513731249430174164790804354
\\
-1900057682869268441725410979144592304&4658975021318032809514788603728789629
\end{bmatrix}}, \hspace{1pt}
&\\
&
{\scriptsize 
\begin{bmatrix}
-389&-285\\
\phantom{-}1455& \phantom{-}1066 
\end{bmatrix}}, \hspace{1pt}
{\scriptsize 
\begin{bmatrix}
166&-225\\
-45&\phantom{-}61 
\end{bmatrix}} \hspace{1pt}
\Big\rangle&\\
\end{eqnarray*}

\vspace{-4mm}

\noindent 
(the huge entries come from 
normalizing determinants to $1$).
Then $\Pi(G_3)=\{5\}$: so $r_3=5$
and $r_5=1$. 
We compute that
$\delta_{G_3}(5)=20$, $\delta_{G_3}(10)=40$, 
$\delta_{G_3}(15)=\allowbreak 60$, 
$\delta_{G_2}(30)=\allowbreak 120$, and  
$\delta_{G_3}(36)=1$.
Therefore $\widetilde{\Pi}(G_3)=\{ 2,3,5\}$.

Using the {\sf GAP} package 
\mbox{kbmag}~\cite{kbmag}, 
it may be verified that $G_1$ and
$G_2$ are \emph{thin} (i.e., they have 
infinite index in $\Gamma$).
We could not decide thinness of $G_3$ 
likewise, as the computation fails to 
terminate.

We further compute that 
$|\Gamma: \cl(G_i)|=
1320, 108, 120$ for 
$i = 1,2,3$, respectively.

\subsection{Testing whether a subgroup 
is a congruence subgroup}

Let $H\leq \Gamma= \SL(2,\Z)$ be 
arithmetic (thus dense).
In \cite[p.~147]{Newman},
the `level' of $H$ is defined 
to be the least positive $k\in \Z$
such that $H$ contains the 
$\Gamma$-normal closure of 
$\big \langle 
{\tiny
\begin{bmatrix}
1 & k \\
0 & 1
\end{bmatrix}} 
\big\rangle$. 
\begin{proposition}
Let $H$ have level $\ell$
(in our usual sense) and 
`level' $k$. 
Then $H$ is a congruence subgroup 
of $\hspace{.5pt} \Gamma$ 
if and only if $\Gamma_k\leq H$. 
In that event, $k=\ell$.
\end{proposition}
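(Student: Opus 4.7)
The plan is to reduce the proposition to Wohlfahrt's classical theorem relating Newman's `level' to the congruence property in $\SL(2,\Z)$. The implication from $\Gamma_k\le H$ to $H$ being a congruence subgroup is immediate from the definition of congruence subgroup, so the content is the reverse implication together with the equality $k=\ell$.

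For the nontrivial direction I invoke Wohlfahrt's theorem (see, e.g., \cite{Newman}): if the arithmetic subgroup $H\le\Gamma$ is a congruence subgroup with Newman `level' $k$, then $\Gamma_k\le H$. This is the sole substantive obstacle in the proof, and I will quote it rather than reprove it.

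To derive $k=\ell$ I argue divisibility in both directions. Writing $N_m$ for the $\Gamma$-normal closure of $\langle t_{12}(m)\rangle$, I first observe that the set $\{m\in\Z_{>0}:N_m\le H\}$ is closed under greatest common divisor: given $N_a,N_b\le H$, the product $N_aN_b$ is normal in $\Gamma$ and contains $t_{12}(\gcd(a,b))$ (since $\gcd(a,b)$ is an integer combination of $a$ and $b$ and $t_{12}$ is additive in its argument), and hence contains $N_{\gcd(a,b)}$ as the smallest normal subgroup of $\Gamma$ containing that element. Minimality of $k$ then forces $k$ to divide every element of this set. Since $\Gamma_\ell$ is normal in $\Gamma$ and contains $t_{12}(\ell)$, we have $N_\ell\le\Gamma_\ell\le H$, so $k\mid \ell$. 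Conversely, Wohlfahrt's theorem gives $\Gamma_k\le H=\cl(H)$, and the characterization of $\ell$ as the minimal level of a PCS contained in $\cl(H)$ yields $\ell\mid k$; hence $k=\ell$.
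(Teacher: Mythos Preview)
Your proof is correct and follows essentially the same approach as the paper: the first claim is precisely Wohlfahrt's theorem as recorded in \cite[Theorem~VIII.8]{Newman}, and the paper likewise says the equality $k=\ell$ ``is then easily derived from the two definitions of level.'' You have simply written out that easy derivation (the gcd-closure of $\{m:N_m\le H\}$ and the two divisibilities $k\mid\ell$, $\ell\mid k$) in more detail than the paper gives.
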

\begin{proof}
For the first claim,   
see \cite[Theorem~VIII.8]{Newman}. 
The second is then easily derived
from the two definitions of level. 
\end{proof}

Relying on \cite[Theorem VIII.8]{Newman}, 
Hsu~\cite{Hsu} designed an algorithm to
test whether $H$
is a congruence subgroup of $\Gamma$. 
The algorithm is 
applied in \cite[Example~2.5]{Hsu},
proving that a certain subgroup
 of index $10$ in $\Gamma$ 
is not a congruence subgroup.
Indeed, running ${\tt LevelMaxPCS}$ 
on this subgroup shows that it
has level $1$.

Alternatives to the approach in
\cite{Hsu} are possible.
If we have a membership test and
a generating set for each PCS 
in $\Gamma$, 
then we can decide whether $H = \cl(H)$. 
Or we can inspect finite 
quotients of $H$. If $H$ has a 
simple finite quotient 
not isomorphic to a subgroup
of any $\SL(2,p)$, then $H$ is not 
a congruence subgroup (in 
\cite[$\S \hspace{1pt} 4.2.4$]{ExpMath}, 
we used this property to justify 
thinness of subgroups of $\SL(3, \Z)$). 

\section{Experimentation}
\label{Exp}

We discuss experiments conducted 
with a {\sf GAP} implementation of 
the main algorithms.
Examples were chosen to 
demonstrate practicality of the 
algorithms. Our {\sf GAP} code is 
available at 
\url{https://github.com/hulpke/arithmetic}.

Here we limited the selection of
parameters in test groups. Longer tables are posted at 
\url{https://www.math.colostate.edu/~hulpke/examples/Sintegral.pdf}.

\subsection{Subgroups of $\SL(2, \Q)$}
\label{sl2Z}

In \cite{Moebius}, we 
investigated the classical
problem of certifying non-freeness 
of (parabolic, $2$-generated) 
M{\"o}bius groups.
In this subsection we continue
experimentation with 
subgroups of $\SL(2, \Q)$. 

For $s\in \Q$, let
$G_s = \langle T, K_s \rangle$ where
\[
T = 
	{\scriptsize
	\begin{bmatrix}
	\phantom{-}0 & 1\, \\
	-1 & 0 \,
	\end{bmatrix}}, \qquad
	K_s = 
	{\scriptsize
	\begin{bmatrix}
	0 & -s \, \\
	\frac{1}{s} &\phantom{-}1
	\end{bmatrix}} .
	\]
According to \cite[Theorem~VIII.16]{Newman},
$G_s =  \langle T\rangle 
\ast  \langle K_s \rangle 
\cong \allowbreak C_2\ast C_3 
\cong \mathrm{PSL}(2, \Z)$ is a 
discrete subgroup of $\SL(2,R)$.
If $s\neq 1$ then $G_s$ is 
thin in $\SL(2,R)$. 
Since $G_s$, $G_t$ are 
$\GL(2,\R)$-conjugate only
if $s= \allowbreak t$, the $G_s$ are
 pairwise isomorphic
non-conjugate
subgroups of $\SL(2,R)$.

We ran $\tt LevelMaxPCS$  
to compute the level $\ell_s$ of 
various $G_s$. Then we computed the index
$|\SL(2,\Z_{\ell_s}):\varphi_{\ell_s}(G_s)|$ 
of $\cl(G_s)$ in $\SL(2,R)$;
see Table~\ref{Ttopone}.
Despite the large indices, 
runtimes were negligible.

\begin{table}[h]
\begin{center}
\caption{Level and index 
of groups $G_s\leq \SL(2,\Q)$}
\label{Ttopone}
\begin{tabular}{l|l|l}
Group
&Level
&Index
\\
\hline
$G_{11}$
&$2^{2}3
{\cdot}13
{\cdot}1117$
&$2^{6}3^{3}7
{\cdot}13
{\cdot}31
{\cdot}43$
\\
$G_{11/3}$
&$2^{2}5^{2}13633$
&$2^{10}3
{\cdot}5^{2}17
{\cdot}71
{\cdot}401$
\\
$G_{11/5}$
&$2^{2}3
{\cdot}7
{\cdot}13
{\cdot}17
{\cdot}12241$
&$2^{16}3^{6}5
{\cdot}7^{2}13
{\cdot}17^{2}6121$
\\
$G_{11/7}$
&$2^{2}3
{\cdot}5^{2}13
{\cdot}17^{2}19
{\cdot}11113$
&$2^{18}3^{7}5^{3}7
{\cdot}13
{\cdot}17^{2}19
{\cdot}463
{\cdot}5557$
\\
\end{tabular}
\end{center}
\end{table}
\subsection{Experiments in 
degrees greater than $2$}
\label{furtherexp}

For $t,s,r\in \C$, let
\[
a(t) = 
{\scriptsize
\begin{bmatrix} 
1 & t & \tfrac{1}{2}t^2 & \tfrac{t^3}{6}\\
0 & 1 & t & \tfrac{1}{2} t^2 \\
0 & 0 & 1 & t \\ 
0 & 0 & 0 & 1\end{bmatrix}}, \quad
b(s) = 
{\scriptsize
\begin{bmatrix} 
1 & 0 & 0 & 0\\
0 & 1 & 0 & 0 \\
0 & 0 & 1 & 0 \\ 
s & 0 & 0 & 1
\end{bmatrix}}, \quad
c(r) = 
{\scriptsize \begin{bmatrix} 
1 & 0 & 0 & 0 \\ 
8r & 1 & 0 & 0 \\ 
48r^2 & 12r & 1 & 0 \\
224r^3 & 84r^2 & 14r & 1 
\end{bmatrix}}.
\]
Then $H_4(t, s) = 
\langle a(t), b(s) 
\rangle\leq \Sp(4, \C)$ 
and $H_4(t, r) = 
\langle a(t), c(r) \rangle
\leq \SL(4,\C)$ are
dense.
Furthermore, if $|t|>8$, $|s|>2$, and $|r|>1$, 
then $H_4(t,s)$ and $H_4(t,r)$ 
are free (see Proposition~3.8
and Example~3.10 of \cite{deGraaf}).

\subsubsection{$\Sp(4, R)$}
\label{sp4}	

Notice that $H_4(t, s)\leq \Sp(\Phi,R)$,
identifying the form $\Phi$ with its 
 matrix
\[
{\scriptsize
\begin{bmatrix} 
\phantom{-}0 & 0 & \! \! \phantom{-}0 & 1\, \\
\phantom{-}0 & 0 & \! \!        -1 & 0 \, \\
\phantom{-}0 & 1 & \! \!\phantom{-}0 & 0 \, \\ 
          -1 & 0 & \! \!\phantom{-}0 & 0 \, 
\end{bmatrix}}.
\]
Since $b(s)$ is a transvection, we can compute
  $\Pi(H_4(t, s))$ via ${\tt PrimesForDense}$.
Table~\ref{Ttoptwo}
displays the level of $H_4(t, s)$ 
and the index of its congruence closure 
in the ambient linear algebraic group 
$\Sp(4, R)$. 

\begin{table}[h]
\begin{center}
\caption{Level and index 
of groups $H_4(t, s)
\leq \Sp(4, R)$}
\label{Ttoptwo}
\begin{tabular}
{l|l|l|l}
$t$&$s$&Level&Index\\
\hline
$25/3$
&$29/3$
&$5^{13}29^{2}$
&$2^{14}3^{4}5^{65}7^{2}13
{\cdot}29^{5}421$
\\
$65/7$
&$3$
&$5^{7}13^{6}$
&$2^{14}3^{4}5^{31}7^{2}13^{28}17$
\\
$71/7$
&$17$
&$17^{2}71^{6}$
&$2^{20}3^{8}5^{3}7^{2}17^{5}29
{\cdot}71^{27}2521$
\\
$21/2$
&$5$
&$3^{6}5^{2}7^{6}$
&$2^{23}3^{29}5^{11}7^{27}13$
\\
$66/5$
&$26/7$
&$2^{9}3^{6}11^{6}13^{2}$
&$2^{61}3^{31}5^{5}7^{2}11^{27}13^{5}17
{\cdot}61$
\\
$27/2$
&$29/3$
&$29^{2}$
&$2^{7}3^{2}5^{2}7^{2}29^{5}421$
\\
$95/7$
&$3$
&$5^{7}19^{6}$
&$2^{14}3^{6}5^{32}13
{\cdot}19^{27}181$
\\
$97/7$
&$29/7$
&$29^{2}97^{6}$
&$2^{20}3^{4}5^{3}7^{6}29^{5}97^{27}421
{\cdot}941$
\\
$17$
&$13$
&$13^{2}17^{6}$
&$2^{18}3^{6}5^{2}7^{2}13^{5}17^{28}29$
\\
$26$
&$23/3$
&$2^{7}13^{6}23^{2}$
&$2^{44}3^{6}5^{3}7^{2}11^{2}13^{27}17
{\cdot}23^{5}53$
\\
$100$
&$16/3$
&$2^{19}5^{13}$
&$2^{113}3^{4}5^{64}13$
\\
\end{tabular}
\end{center}
\end{table}

\subsubsection{$\SL(4, R)$}
\label{sl4}
	 
We compute the level of 
$H_4(t, r)$
and then $|\SL(4,R):\cl(H_4(t,r))|$.
A sample of this data is compiled 
in Table~\ref{Ttopthree}.
Runtimes were negligible.
However, computing $\Pi(H_4(t,r))$
 is slow, as we do this via the algorithm 
${\tt PrimesNonSurjectiveSL}$
from \cite[$\S \hspace{1pt} 4$]{SAT}. 
This tests irreducibility of an adjoint 
representation.

\begin{table}[h]
\begin{center}
\caption{Level and index of 
groups $H_4(t,r)\leq \SL(4,R)$}
\label{Ttopthree}
\begin{tabular}{l|l|l|l}
$t$&$r$
&Level
&Index
\\
\hline
$10$ & $1/3$
&$2^{10}5^{4}7$
&$2^{87}3^{5}5^{24}7
{\cdot}13
{\cdot}31$
\\
$9$& $5/2$
&$3^{9}5^{4}7$
&$2^{22}3^{64}5^{24}13^{2}31$
\\
$10$& $4/3$
&$2^{19}5^{4}7$
&$2^{149}3^{5}5^{24}7
{\cdot}13
{\cdot}31$
\\
$15$& $7/4$
&$3^{5}5^{4}7^{5}$
&$2^{27}3^{37}5^{24}7^{25}13^{2}19
{\cdot}31$
\\
\end{tabular}
\end{center}
\end{table}

\subsubsection{$\SL(5, R)$}
Finally, we consider discrete faithful 
representations  
of the triangle group
$\Delta(3,3,4)$ in $\SL(5, \mathbb{R})$ 
(see \cite[$\S \hspace{1pt} 3$]{Long}
and 
\cite[$\S \hspace{1pt} 4.2.2$]{ExpMath}).
Let $H_3(k) = \langle a_3(k), 
\allowbreak b_3(k) \rangle$, where
\[
a_3(k) = 
{\small
\begin{bmatrix}
1 & 0 & -3-2k-8k^2 & -1+10k+32k^3 & -5-16k^2\\
0 & 4(-1+k) & -13 - 4k & 3+16(1+k)^2 & -4+16k \\
0 & 1-k+4k^2 & 3-2k+8k^2 & -2(1+3k+16k^3) & 3+16k^2\\
0 &  k & 2k & 1-2k-8k^2 & 1+4k\\ 
0 & 0 &  3k & 3(-1+k-4k^2) & -2
\end{bmatrix}}
\]
and
\[
b_3(k) = 
{\small 
\begin{bmatrix} 
0 & 0 & -3-2k-8k^2 & -1+10k+32k^3 & -5-16k^2\\
0 &1 & 3+4k & -13-8k-16k^2 & 4-16k\\
0 & 0 & -2(1+k+4k^2) & 6k+32k^3 & -3-16k^2\\
1 & 0 & -2(1+k) & -1+2k+8k^2 & -1-4k\\
2k & 0 & 1-2k &-4k & 1\end{bmatrix}}.
\]
\noindent 
If $k\in \Q$ then $H_3(k)$ is a thin 
subgroup of $\SL(5,R)$.
As usual, we compute 
the level and index for the 
congruence closure of several $H_3(k)$: 
 see Table~\ref{Ttopfour}.

\begin{table}[h]
\begin{center}
\caption{Level and index of 
groups $H_3(k)\leq \SL(5,R)$}
\label{Ttopfour}
{\tiny
\begin{tabular}{l|l|l}
\text{$k$}
&\text{Level}
&\text{Index}
\\
\hline
\vspace{-7.5pt} & & \\
$1/3$
&$2^{6}73^{2}$
&$2^{93}3^{10}5^{2}7
{\cdot}13
{\cdot}31
{\cdot}37^{2}41
{\cdot}73^{20}1801
{\cdot}28792661$
\\
$2/5$
&$2^{3}23^{2}29^{2}37$
&$2^{46}3^{11}5^{4}7^{6}11^{5}13
{\cdot}19
{\cdot}23^{20}29^{20}31
{\cdot}37^{4}41
{\cdot}53
{\cdot}67^{2}79
{\cdot}421
{\cdot}4271
{\cdot}292561
{\cdot}732541$
\\
$5/8$
&$139
{\cdot}151^{2}$
&$2^{14}3^{8}5^{10}7^{2}13^{2}19^{2}23^{3}41
{\cdot}139^{4}151^{20}499
{\cdot}877
{\cdot}1093
{\cdot}9170881
{\cdot}104670301$
\\
$4/5$
&$2^{3}11^{2}73
{\cdot}113^{2}$
&$2^{60}3^{9}5^{8}7^{5}11^{21}13^{2}19^{3}31
{\cdot}41
{\cdot}61
{\cdot}73^{4}113^{20}251
{\cdot}991
{\cdot}1277
{\cdot}1801
{\cdot}3221
{\cdot}59581
{\cdot}28792661$
\\
$7/6$
&$11^{2}23^{2}61$
&$2^{25}3^{7}5^{10}7^{2}11^{24}13
{\cdot}19
{\cdot}23^{20}31
{\cdot}53
{\cdot}61^{5}79
{\cdot}97
{\cdot}131
{\cdot}3221
{\cdot}21491
{\cdot}292561$
\\
$11/5$
&$2^{6}61^{2}97
{\cdot}103^{2}$
&$2^{112}3^{14}5^{8}7
{\cdot}11^{2}13^{3}17^{4}31^{4}61^{20}97^{5}103^{20}131
{\cdot}941
{\cdot}1061
{\cdot}1861
{\cdot}3169
{\cdot}3571
{\cdot}21491
{\cdot}262321
{\cdot}10332211$
\end{tabular}}
\end{center}
\end{table}

\noindent
We compute $\Pi(H_3(k))$ using 
procedures from \cite{ExpMath}
that do not entail adjoint 
representations.
Runtimes are much
shorter than those for
${\tt PrimesNonSurjectiveSL}$.

\subsection{An application}

G.~Soifer~\cite[Question~2.26]{BanffReport} 
asked:
\begin{quote}
\emph{Does there exist a profinitely dense 
subgroup of 
$\hspace{1pt} \SL(n, \Z)$ generated 
by two elements?}
\end{quote}

\noindent 
We answer this question affirmatively.

Certain free subgroups 
of $\SL(n, \Z)$ for $n \geq 3$  
are given by Humphries 
 in \cite{Humphries}.
Each surjects onto 
$\SL(n,p)$ modulo  all primes $p$.
One such group ($G_4$ 
in \cite[p.~983]{Density}) is 
generated by 
\[
f_1 = 
{\scriptsize \begin{bmatrix} 
1 & x^2+1 & x\\
0 & 1 & 0\\
0 & 0 & 1 \\
\end{bmatrix}}, 
\quad 
f_2 = {\scriptsize
\begin{bmatrix}
 1 & 0 & 0\\
x & 1 & x+1\\
0 & 0 & 1 \\
\end{bmatrix}},
\quad
f_3 ={\scriptsize 
\begin{bmatrix} 
1 & 0 & 0 \\
0 & 1 & 0\\
-x+1 & x^2 & 1 \\
\end{bmatrix}}
\]
for $x=11$. Let
\[
a=f_2 f_1 f_2^{-1} f_1^{-1} f_2 f_1^{-1} f_3^{-1} ,
\qquad 
 b=(f_1^{-1} f_3^{-2} f_2 f_3^{-1})^{f_2}.
\]
Explicitly,
\[
a=
{\scriptsize \begin{bmatrix}
-176379047&1936571625&-17817867\\
-1941616205&21318171799&-196142682\\
10&-121&1\\
\end{bmatrix}}, \ \ \
b=
{\scriptsize
\begin{bmatrix}
-40193888&-3684209 & -44180039\\
\phantom{-}396162230 &
\phantom{-}36312596 & \phantom{-}435450841\\
\phantom{-}3829559 & \phantom{-}351021 &
\phantom{-}4209349
\end{bmatrix}}.
\]
We verified that $H=\langle a,b\rangle$ 
 surjects onto $\SL(3, \Z_q)$ 
modulo $q=4$ and all primes $q$.
Thus $\varphi_m(H) =\SL(n, \Z_m)$ 
for all $m>1$, 
by Remark~\ref{ProfiniteNascent}.

Examples for $n>3$ that
answer Soifer's question affirmatively
 may be constructed in similar fashion 
(cf.~\cite[Corollary~3.3]{SoiferVenky}).

\subsection*{Acknowledgements}
We thank Centre 
International de Rencontres 
Math\'{e}matiques,
The Institute 
for Computational and Experimental 
Research in Mathematics, and
Mathematisches Forschungsinstitut 
Oberwolfach,  for  
support received through the
Research in Pairs, 
Collaborate@ICERM, and
Research Fellowship programmes.  
We thank Willem de Graaf for his
 assistance.
The third author's work has been 
supported in part by
NSF Grant DMS-1720146 and 
Simons Foundation Grant~852063, 
which are both gratefully 
acknowledged.

\bibliographystyle{amsplain}

\end{document}